\newcommand{\R}{{\mathbb R}}
\newcommand{\N}{{\mathbb N}}
\newcommand{\proj}{\mathop{\bf proj}}
\newcommand{\maximize}{\mathop{\rm maximize}}
\newcommand{\minimize}{\mathop{\rm minimize}}
\newcommand{\tx}{\hat{x}}
\newcommand{\bz}{\bar{z}}
\newtheorem*{theorem*}{Theorem}
\newtheorem{proposition}{Proposition}
\newcommand{\callGenericIterativeAlgorithm}[1]{\hyperref[function:GenericIterativeAlgorithm]{\GenericIterativeAlgorithm{#1}}}
\newcommand{\epsMach}{\epsilon_{\text{mach}}}
\newcommand{\epsZero}{\epsilon_{\text{zero}}}
\newcommand{\InitialTau}[1]{\tau_{\text{init}}}
\newcommand{\xStar}{x^{\star}}
\newcommand{\yStar}{y^{\star}}
\newcommand{\Lag}[0]{\mathcal{L}}
\newcommand{\T}{\top}
\newcommand{\tA}{\tilde{A}}
\newcommand{\tK}{\tilde{K}}
\newcommand{\tG}{\tilde{G}}
\newcommand{\thh}{\tilde{h}}
\newcommand{\tb}{\tilde{b}}
\newcommand{\tu}{\tilde{u}}
\newcommand{\tl}{\tilde{l}}
\newcommand{\tc}{\tilde{c}}
\begin{document}
\lstset{
breaklines=true,
basicstyle=\small\ttfamily,
breakatwhitespace=true,
breakautoindent=false,
breakindent=0pt,
resetmargins=true,
}
	\title{Practical Large-Scale Linear Programming using\\ Primal-Dual Hybrid Gradient}

	\author{ David Applegate\thanks{
  Google Research (\texttt{\{dapplegate, mlubin, wschudy\}@google.com});} \qquad Mateo D\'iaz\thanks{California Institute of Technology
   (\texttt{mateodd@caltech.edu});} \qquad
  Oliver Hinder\thanks{
  Google Research and University of Pittsburgh (\texttt{ohinder@pitt.edu});} \qquad 
  Haihao Lu\thanks{
  Google Research and University of Chicago (\texttt{haihao.lu@chicagobooth.edu});} \\
  Miles Lubin$^\ast$  \qquad
  Brendan O'Donoghue\thanks{
  Deepmind (\texttt{bodonoghue@deepmind.com});}  \qquad
  Warren Schudy$^\ast$
}
\date{}
\maketitle
\begin{abstract}
We present PDLP, a practical first-order method for linear programming (LP) that can solve to the high levels of accuracy that are expected in traditional LP applications. In addition, it can scale to very large problems because its core operation is matrix-vector multiplications.
PDLP is derived by applying the primal-dual hybrid gradient (PDHG) method, popularized by Chambolle and Pock (2011), to a saddle-point formulation of LP. PDLP enhances PDHG for LP by combining several new techniques with older tricks from the literature; the enhancements include diagonal preconditioning, presolving, adaptive step sizes, and adaptive restarting. PDLP improves the state of the art for first-order methods applied to LP. We compare PDLP with SCS, an ADMM-based solver, on a set of 383 LP instances derived from MIPLIB 2017. With a target of $10^{-8}$ relative accuracy and 1 hour time limit, PDLP achieves a 6.3x reduction in the geometric mean of solve times and a 4.6x reduction in the number of instances unsolved (from 227 to 49).
Furthermore, we highlight standard benchmark instances and a large-scale application (PageRank) where our open-source prototype of PDLP, written in Julia, outperforms a commercial LP solver.
\end{abstract}
\section{Introduction}

First-order methods (FOMs), which use gradient and not Hessian information, are now applied as standard practice in many areas of optimization~\cite{BeckBook}. A known weakness of FOMs is the \textit{tailing-off} effect, where FOMs quickly find moderately accurate solutions, but progress towards an optimal solution slows down over time. While moderately accurate solutions are often sufficient for large machine learning applications, other applications traditionally demand higher precision. One such area is Linear Programming (LP), the focus of this work.

LP is a fundamental class of optimization problems in applied mathematics, operations research, and computer science with a huge range of applications, including mixed-integer programming, scheduling, network flow, chip design, budget allocation, and many others \cite{dantzig2016linear, schrijver1998theory, vanderbei2015linear, boyd2004convex}. Software for solving LP problems, called \textit{LP solvers}, originated in the earliest days of computing, predating the invention of operating systems~\cite{o-h1984history}. The state-of-the-art methods for LP, namely Dantzig's simplex method \cite{dantzig1990origins,dantzig2016linear} and interior-point (or barrier) methods \cite{nesterov1994interior}, are quite mature and reliable at delivering highly accurate solutions. These widely successful methods have left little room for FOMs to make inroads. Furthermore, practitioners who use LP solvers are not accustomed to reasoning about the trade-off between accuracy and computing times typically intrinsic to FOMs.

In this paper, we provide evidence that, if properly enhanced, FOMs can obtain high quality solutions to LP problems quickly. Indeed, there's reason to expect this, as authors have developed FOMs for LP with linear rates of convergence~\cite{wang2017admmlp, eckstein1990alternating, gilpin2012first, yang2018rsg, necoara2019linearfom}.
On the other hand, the linear rates depend on potentially loose and hard-to-compute constants; hence, tailing off may still be observed in practice.
To our knowledge, ours is the first work to combine both theoretical enhancements with practical heuristics, demonstrating their combined effectiveness with extensive computational experiments on standard benchmark instances. In fact, our experiments will expose a substantial gap between algorithms presented in the literature and what's needed to obtain good performance.

Starting from a baseline primal-dual hybrid gradient (PDHG) method~\cite{chambolle2011first} applied to a saddle point formulation of LP, we develop a series of algorithmic improvements. These enhancements include adaptive restarting~\cite{applegate2021restarts}, dynamic primal-dual step size selection \cite{goldstein2013adaptive, goldstein2015adaptive}, presolving techniques  \cite{achterberg2020presolve}, and diagonal preconditioning (data equilibration) \cite{giselsson2016linear}. Most of these enhancements, while inspired by existing literature, are novel.
We name our collection of enhancements \textit{PDLP} (PDHG for LP). 

The impact of these improvements is substantial. For example, on 383 LP instances derived from the MIPLIB 2017 collection~\cite{gleixner2021miplib}, our implementation of a baseline version of PDHG solved only 50 problems to $10^{-8}$ relative accuracy given a limit of approximately 100,000 iterations per problem. By contrast, PDLP solves 283 of the 383 problems under the same conditions. We demonstrate that PDLP outperforms FOM baselines and, in a small number of cases, obtains performance competitive with a commercial LP solver.

Although not the focus of this paper, we believe that our results open the door to a new set of possibilities and computational trade-offs when solving LP problems. PDLP has the potential to solve extremely large scale instances where the simplex method and interior-point methods are unable to run because of their reliance on matrix factorization. Since PDLP uses matrix-vector operations at its core, it can effectively run on multi-threaded CPUs, GPUs~\cite{SpmvGPUBenchmark2020}, or distributed clusters~\cite{EcksteinMatyasfalvi2018}. Furthermore, a GPU implementation of PDLP could efficiently solve batches of similar problems, a setup that has already been successfully applied with other optimization algorithms in applications like strong branching~\cite{nair2020neuralmip} and training neural networks that contain optimization layers~\cite{amos2017optnet}.

\paragraph{Outline.} The remainder of this section focuses on related work. Section \ref{sec:pdhg} introduces LP and PDHG. Section \ref{sec:practical-algorithmic-improvements} describes the set of enhancements that define PDLP. Section \ref{sec:experiments} presents numerical experiments, and Section \ref{sec:futurework} concludes and outlines future directions.

\subsection{Literature review}

\paragraph{PDHG} 
PDHG was first developed by Zhu and Chan \cite{zhu2008efficient}, with subsequent analysis and extension by a number of authors \cite{esser2010general, pock2009algorithm, chambolle2011first, he2012convergence, condat2013primal, chambolle2018stochastic,alacaoglu2019convergence}. PDHG is closely related to the Arrow-Hurwicz method \cite{arrow58}. PDHG is a form of operator-splitting \cite{bauschke2011convex, ryu2016primer} and can be interpreted as a variant of the alternating directions method of multipliers (ADMM) and Douglas-Rachford splitting (DRS) \cite{eckstein1992douglas, boyd2011distributed, o2020equivalence}, which themselves are both instantiations of the proximal point method \cite{rockafellar1976monotone, eckstein1992douglas, parikh2014proximal}. As opposed to ADMM or DRS, PDHG is `matrix-free' in that the data matrix is only used for matrix-vector multiplications. This allows PDHG to scale to problems even larger than those tackled by these other techniques, and to make better use of parallel and distributed computation.

\paragraph{FOM-based solvers} Recent interest in large-scale cone programming has sparked the development several first-order solvers based on competing methods. ProxSDP \cite{souto2020exploiting} is a solver for semidefinite programming based on PDHG. Solvers based on Nesterov's accelerated gradients \cite{nesterov1983method} include TFOCS \cite{becker2011templates}, and FOM which is a suite of solvers employing both gradient and proximal algorithms \cite{beck2019fom}.
Solvers based on operator splitting techniques like ADMM include SCS \cite{ocpb:16, scs, o2020operator}, OSQP \cite{stellato2018osqp}, POGS \cite{fougner2018parameter}, and COSMO \cite{garstka_2019}. Of these both SCS and POGS offer a matrix-free implementation where the linear system, that arises from the proximal operator used in ADMM, is solved using the conjugate gradient method. However, we shall show experimentally that our method can be significantly faster and more robust than this approach. Finally, \cite{ali2017semismooth} considers applying a truncated semismooth Newton method to the system of equations defining a fixed point of the SCS operator. 

\paragraph{FOMs for LP}
Lan, Lu and Monteiro \cite{Lan2011} and Renegar \cite{Renegar2019} develop FOMs for LP as a special case of semidefinite programming, with sublinear convergence rates. The FOM-based solvers above all apply to more general problem classes like cone programming or quadratic programming. In contrast, some of the enhancements that constitute PDLP are specialized, either in theory or practice, for LP (namely restarts~\cite{applegate2021restarts} and presolving). A number of authors \cite{wang2017admmlp, eckstein1990alternating, gilpin2012first, yang2018rsg, necoara2019linearfom} have proposed linearly convergent FOMs for LP; to our knowledge, none have been subject of a comprehensive computational study.  ECLIPSE~\cite{pmlr-v119-basu20a} solves huge-scale industrial LP problems by accelerated gradient descent, without presenting comparisons on standard test problems. Lin et al. \cite{lin2021admm} propose an ADMM-based interior point method. In contrast with PDLP which solves to high accuracy (i.e., $10^{-8}$ relative error), \cite{lin2021admm} perform experiments with $10^{-3}$ and $10^{-5}$ relative error. SNIPAL \cite{li2020asymptotically} is a semismooth Newton method based on the proximal augmented Lagrangian. SNIPAL has fast asymptotic convergence, yet, to get good performance, the authors use ADMM for warm-starts. Given PDLP’s favorable comparisons with SCS, it’s plausible that PDLP could provide a more effective warm-start.  Finally, Pock and Chambolle~\cite{pock2011diagonal} apply PDHG with diagonal preconditioning to a limited set of test LP problems and Applegate et al.~\cite{applegate2021infeasibility} show how to extract infeasibility certificates when applying PDHG to LP.

\section{Preliminaries}\label{sec:pdhg}

In this section, we introduce the notation we use throughout the paper,  summarize the LP formulations we solve, and introduce the baseline PDHG algorithm. 

\paragraph{Notation.}
Let $\R$ denote the set of real numbers, $\R^{+}$ the set of nonnegative real numbers, and $\R^{-}$ the set of nonpositive real numbers. Let $\N$ denote the set of natural numbers (starting from one). 
Let $\| \cdot \|_p$ denote the $\ell_p$ norm for a vector, and let $\|\cdot\|_2$ denote the spectral norm for a matrix. 
For a vector $v \in \R^n$, we use $v^+$ and $v^-$ for their positive and negative parts, i.e., $v_i^+ = \max\{0, v_i\}$ and $v_i^- = \min\{0, v_i\}$. The symbol $v_{1:m}$ denotes the vector with the first $m$ components of $v$. 
The symbols $K_{i,\cdot}$ and $K_{\cdot,j}$ correspond to the $i$th column and $j$th row of the matrix $K$, respectively.
The symbol $\mathbf{1}$ denotes the vector of all ones. Given a convex set $X$, we use $\proj_X$ to denote the map that projects onto $X.$

\paragraph{Linear Programming.} We solve primal-dual LP problems of the form:
\begin{equation}\label{eq:lp}
    \begin{aligned}[c]
    \minimize_{x\in \R^n}~~ &~ c^\top x \\
\text{subject to:}~~ &~ Gx \geq h \\
& ~ Ax = b \\
& ~ l \leq x \leq u
    \end{aligned}
    \qquad\qquad\qquad
    \begin{aligned}[c]
\maximize_{y\in \R^{m_1 + m_2}, \lambda\in\R^n} \quad & q^\T y + l^\T \lambda^{+} - u^\T \lambda^{-} \\
\text{subject to:}\quad & c - K^\T y = \lambda \\
&  y_{1:m_1} \geq 0 \\
& \lambda \in \Lambda\ ,
    \end{aligned}
\end{equation} 
where $G \in \R^{m_1\times n}$, $A \in \R^{m_2 \times n}$, $c \in \R^{n}$, $h \in \R^{m_1}$, $b \in \R^{m_2}$, $l \in (\R \cup \{ -\infty \})^{n}$, $u \in (\R \cup \{ \infty \})^{n}$, $K^\top = \begin{pmatrix} G^\T, A^\T \end{pmatrix}$, $q^\T := \begin{pmatrix}
h^\T,
b^\T
\end{pmatrix}$, and
$$
\Lambda = \Lambda_{1} \times \dots \times \Lambda_{n} \quad \Lambda_i :=
\begin{cases}
\{ 0 \} & l_i = -\infty, ~ u_i = \infty,  \\
\R^{-} & l_i = -\infty, ~ u_i \in \R \\
\R^{+} & l_i \in \R, ~ u_i = \infty \\
\R & \text{otherwise} 
\end{cases}
$$
is the set of variables $\lambda$ such that the dual objective is finite. This pair of primal-dual problems is equivalent to the  saddle-point problem:
\begin{flalign}\label{eq:primal-dual}
\min_{x \in X} \max_{y \in Y} \Lag(x,y) := c^\T x - y^\T K x + q^\T y
\end{flalign}
with $X := \{x \in \R^n : l \leq x \leq u \}$, and $Y := \{y \in \R^{m_1+m_2} : y_{1:m_1} \geq 0\}.$

\paragraph{PDHG.} When specialized to \eqref{eq:primal-dual}, the PDHG algorithm takes the form:
\begin{align}\label{eq:standard-pdhg}
\begin{split}
x^{k+1} &= \proj_{X}(x^k - \tau (c - K^\top y^k))\\
y^{k+1} &= \proj_{Y}(y^k + \sigma (q - K (2 x^{k+1} - x^k)))
\end{split}
\end{align}
where $\tau, \sigma > 0$ are primal and dual step sizes, respectively.  PDHG is known to converge to an optimal solution when $\tau \sigma \| K \|_2^2 \le 1$ \cite{condat2013primal,chambolle2016ergodic}.
We reparameterize the step sizes by \begin{equation}
    \tau = \eta / \omega  \quad \text{and} \quad \sigma = \omega \eta \qquad \text{with }\eta \in (0, \infty) \quad \text{and} \quad \omega \in (0,\infty).
\end{equation}
We call $\omega \in (0, \infty)$ the \emph{primal weight}, and $\eta \in (0,\infty)$ the \emph{step size}. Under this reparameterization PDHG converges for all $\eta \le 1 / \| K \|_2$. This allows us to control the scaling between the primal and dual iterates with a single parameter $\omega$. We use the term \emph{primal weight} to describe $\omega$ because it weights the primal variables in the following norm:
$$
\| z \|_{\omega} := \sqrt{ \omega \| x \|_2^2 + \frac{\| y \|_2^2}{\omega} }.
$$
This norm plays a role in the theory for PDHG \cite{chambolle2016ergodic} and later algorithmic discussions.

For the \emph{baseline PDHG algorithm} that we use for comparisons, we consider two simple choices for $\eta$ and $\omega$. For the step size, set $\eta = 0.9 / \| K \|_2$ where $\| K \|_2$ is estimated via power iteration, and for the primal weight we set $\omega = 1$; this is similar to the default parameters in the standard PDHG implementation in ODL~\cite{adler2017odl}.

\section{Practical algorithmic improvements}\label{sec:practical-algorithmic-improvements}

 In this section, we detail these enhancements, and defer further experimental testing of them to Section \ref{sec:experiments} and ablation studies to Appendix~\ref{sec:ablation}.
While our enhancements are inspired by theory, our focus is on practical performance. The algorithm as a whole has no convergence guarantee, although some individual enhancements do; see Section~\ref{sec:guarantees} for further discussion.

\newcommand{\zc}[0]{z_{\text{c}}}
\newcommand{\rc}[0]{r_{\text{c}}}

Algorithm~\ref{alg:practical-algorithm} presents pseudo-code for PDLP after preprocessing steps.
We modify the step sizes (Section~\ref{sec:step-size-choice-body}), add restarts (Section~\ref{sec:adaptive-restarts}), and dynamically update the primal weights (Section~\ref{sec:primal-weight-update}). Before running Algorithm~\ref{alg:practical-algorithm} we apply presolve (Section~\ref{sec:presolve}) and diagonal preconditioning (Section~\ref{sec:diag}).
There are some minor differences between the pseudo-code and the actual code.
In particular, we only evaluate the restart or termination criteria (Line~\ref{line:restart-or-termination}) every $40$ iterations. This reduces the associated overheads with minimal impact on the total number of iterations. We also
check the termination criteria before beginning the algorithm or if we detect a numerical error.

\begin{algorithm}
\SetAlgoLined
{\bf Input:} An initial solution $z^{0,0}$\;
Initialize outer loop counter $n\gets 0$, total iterations $k \gets 0$, step size $\hat \eta^{0,0} \gets 1/\| K \|_{\infty}$,  primal weight $\omega^{0} \gets$ \InitializePrimalWeight{c, q}\;
 \Repeat{termination criteria holds}{
 $t \gets 0$\;
 \Repeat{restart or termination criteria holds}{
    $z^{n,t+1}, \eta^{n,t+1}, \hat \eta^{n,t+1} \gets$ \AdaptiveStepOfPDHG{$z^{n,t}, \omega^{n}, \hat \eta^{n,t}, k$} \;
$\bz^{n,t+1}\gets\frac{1}{\sum_{i=1}^{t+1} \eta^{n,i}} \sum_{i=1}^{t+1} \eta^{n,i} z^{n,i}$ \; 
    $\zc^{n,t+1} \gets$ \GetRestartCandidate{$z^{n,t+1}, \bz^{n,t+1}, z^{n,0}$} \;
    $t\gets t+1$, $k \gets k + 1$ \;
 }\label{line:restart-or-termination}
  \textbf{restart the outer loop.} $z^{n+1,0}\gets \zc^{n,t}$, $n\gets n+1$\;
  $\omega^{n} \gets $\PrimalWeightUpdate{$z^{n,0}, z^{n-1,0}, \omega^{n-1}$} \;
 }
 {\bf Output:} $z^{n,0}$.
 \caption{PDLP (after preconditioning and presolve)}
 \label{alg:practical-algorithm}
\end{algorithm}

\subsection{Step size choice}\label{sec:step-size-choice-body}

\begin{algorithm}
\SetAlgoLined
\Fn{\AdaptiveStepOfPDHG($z^{n, t}$, $\omega^n$, $\hat \eta^{n,t}, k$)}{
$(x, y) \gets z^{n, t}$, $\eta \gets \hat \eta^{n,t}$ \;
 \For{$i=1,\dots,\infty$}{
   $x' \gets \proj_{X}(x - \frac{\eta}{\omega^{n}} (c - K^\top y))$ \;
   $y' \gets \proj_{Y}(y + \eta\omega^{n} (q - K (2 x' - x)))$ \;
   $\bar \eta \gets \frac{\| (x'-x, y'-y) \|_{\omega^{n}}^2 }{ 2(y'-y)^\T K(x'-x)}$ \;
   $\eta' \gets \min\left(
      (1 - (k + 1)^{-0.3}) \bar \eta,
      ( 1 + (k + 1 )^{-0.6}) \eta\right)$ \;
   \If{$\eta \le \bar \eta$}{
     \Return{$(x', y')$, $\eta$, $\eta'$} 
   }
   $\eta \gets \eta'$ \;
 }
}
 \caption{One step of PDHG using our step size heuristic}
 \label{alg:step-size}
\end{algorithm}

The convergence analysis \cite[Equation (15)]{chambolle2016ergodic}
of PDHG (equation \eqref{eq:standard-pdhg})
 relies on a small constant step size
\begin{equation}
      \eta \le \frac{\| z^{k+1}-z^{k} \|_\omega^2}{2(y^{k+1}-y^k)^\T K(x^{k+1}-x^{k})} \label{eq:acceptableStep}
\end{equation}
where $z^{k} = (x^{k}, y^{k})$.
Classically one would ensure \eqref{eq:acceptableStep} by picking $\eta = \frac{1}{\|K\|_2}$. This is overly pessimistic and requires estimation of $\|K\|_2$. Instead our \AdaptiveStepOfPDHG adjusts $\eta$ dynamically to ensure that \eqref{eq:acceptableStep} is satisfied. If \eqref{eq:acceptableStep} isn't satisfied, we abort the step; i.e., we  reduce $\eta$, and try again. If \eqref{eq:acceptableStep} is satisfied we accept the step. This is described in Algorithm~\ref{alg:step-size}.
Note that in Algorithm~\ref{alg:step-size} $\bar \eta \ge \frac{1}{\|K\|_2}$ holds always, and from this one can show the resulting step size $\eta \ge \frac{1 - o(1)}{\|K\|_2}$ holds as $k \rightarrow \infty$.

Our step size routine compares favorably in practice with the line search by Malitsky and Pock \cite{malitsky2018linesearch} (See Appendix~\ref{sec:step-size-choice}).

\subsection{Adaptive restarts}\label{sec:adaptive-restarts}
In PDLP, we adaptively restart the PDHG algorithm in each outer iteration.
The key to our restarts at the $n$-th outer iteration is the normalized duality gap at $z$ which for any radius $r \in (0,\infty)$ is defined by
$$
\rho_{r}^n(z) := \frac{1}{r} \maximize_{(\hat{x},\hat{y}) \in \{  \hat{z} \in Z : \| \hat{z} - z \|_{\omega^{n}} \le r \}}\{ \Lag(x, \hat{y}) - \Lag(\hat{x}, y)\} ,
$$
introduced by \cite{applegate2021restarts}. Unlike the standard duality gap
$$
\maximize_{(\hat{x},\hat{y}) \in Z} \{ \Lag(x, \hat{y}) - \Lag(\hat{x}, y)\},
$$ 
the normalized duality gap is always a finite quantity. Furthermore, for any value of $r$ and $\omega^n$, the normalized duality gap $\rho_r^n(z)$ is $0$ if and only if the solution $z$ is an optimal solution to \eqref{eq:primal-dual} \cite{applegate2021restarts}; thus, it provides a valid metric for measuring progress towards the optimal solution. The normalized duality gap is computable in linear time \cite{applegate2021restarts}. For brevity, define $\mu_n(z, z_{\text{ref}})$ as the normalized duality gap at $z$ with radius $\| z -  z_{\text{ref}} \|_{\omega^{n}}$, i.e.,
$$
\mu_n(z, z_{\text{ref}}) := \rho_{\| z -  z_{\text{ref}} \|_{\omega^{n}}}^n(z) ,
$$
where $z_{\text{ref}}$ is a user-chosen reference point.

\paragraph{Choosing the restart candidate.}
To choose the restart candidate $\zc^{n,t+1}$ we call %
$$
\text{\GetRestartCandidate{$z^{n,t+1}, \bz^{n,t+1}, z^{n,0}$}} := \begin{cases}
z^{n,t+1} & \mu_{n}(z^{n,t+1}, z^{n,0}) < \mu_n(\bz^{n,t+1}, z^{n,0}) \\
\bz^{n,t+1} & \text{otherwise}  \ .
\end{cases}
$$
This choice is justified in Remark~5 of \cite{applegate2021restarts}.

\paragraph{Restart criteria.} We define three parameters: $\beta_{\text{sufficient}} \in (0,1)$, $\beta_{\text{necessary}} \in (0, \beta_{\text{sufficient}})$ and $\beta_{\text{artificial}} \in (0,1)$. In PDLP we use $\beta_{\text{sufficient}} = 0.9$, $\beta_{\text{necessary}} = 0.1$, and $\beta_{\text{artificial}} = 0.5$. The algorithm restarts if one of three conditions holds: 

\begin{itemize}[leftmargin=.7cm]
    \item[(i)] (\textbf{Sufficient decay in normalized duality gap})
$
\mu_n(\zc^{n,t+1}, z^{n,0}) \le \beta_{\text{sufficient}} \mu_n(z^{n,0}, z^{n-1,0}) \ ,
$
    \item[(ii)] (\textbf{Necessary decay + no local progress in normalized duality gap}) 
    \begin{equation*}
        \mu_n(\zc^{n,t+1}, z^{n,0}) \le \beta_{\text{necessary}} \mu_n(z^{n,0}, z^{n-1,0}) \quad \text{and} \quad \mu_n(\zc^{n,t+1}, z^{n,0}) > \mu_n(z^{n,t}_c, z^{n,0}) \ ,
    \end{equation*}
        \item[(iii)]  (\textbf{Long inner loop})
    $t \ge \beta_{\text{artificial}} k\ .$
\end{itemize}
The motivation for (i) is presented in \cite{applegate2021restarts}; it guarantees the linear convergence of restarted PDHG on LP problems. 
The second condition in (ii) is inspired by adaptive restart schemes for accelerated gradient descent where restarts are triggered if the function value increases \cite{o2015adaptive}.
The first inequality in (ii) provides a safeguard for the second one, preventing the algorithm restarting every inner iteration or never restarting.
The motivation for (iii) relates to the primal weights (Section~\ref{sec:primal-weight-update}). In particular, primal weight updates only occur after a restart, and condition (iii) ensures that the primal weight will be updated infinitely often.
This prevents a bad choice of primal weight in earlier iterations causing progress to stall for a long time.

\subsection{Primal weight updates}\label{sec:primal-weight-update}

The primal weight is initialized using
\begin{flalign*}
\InitializePrimalWeight{c, q} := \begin{cases} \frac{\| c \|_2}{\| q \|_2} & \| c \|_2, \| q \|_2 > \epsZero \\
1 & \text{otherwise}
\end{cases}
\end{flalign*}
where $\epsZero$ is a small nonzero tolerance.
This primal weight update scheme guarantees scale invariance.
In particular, in Appendix~\ref{app:scale-invariance} we consider PDHG with $\epsZero = 0$, $\eta = 0.9/\| K \|_2$ and $\omega = \InitializePrimalWeight{c, q}$. 
In this simplified setting, we prove that if we multiply the objective, constraints, or the right hand side and variable bounds by a scalar then the iterate behaviour remain identical (up to a scaling factor). 

\begin{algorithm}
\caption{Primal weight update}\label{alg:primal-weight-update}
\SetAlgoLined
\Fn{\PrimalWeightUpdate{$z^{n,0}, z^{n-1,0}, \omega^{n-1}$}}{
$\Delta_x^n = \| x^{n,0} - x^{n-1,0} \|_2, \quad \Delta_y^n = \| y^{n,0} - y^{n-1,0} \|_2$ \;
\uIf{$\Delta_x^n > \epsZero$ and $\Delta_y^n > \epsZero$}{
\Return{ $\exp\left(\theta \log\left( \frac{\Delta_y^n}{\Delta_x^n} \right) + (1 - \theta) \log(\omega^{n-1}) \right)$}
}
\Else{
\Return{$\omega^{n-1}$} \;
}
}
\end{algorithm}

Algorithm~\ref{alg:primal-weight-update} aims to choose the primal weight $\omega^{n}$ such that distance to optimality in the primal and dual is the same, i.e.,
$\| (x^{n,t} - \xStar, \mathbf{0}) \|_{\omega^n} \approx \| (\mathbf{0}, y^{n,t} - \yStar) \|_{\omega^n}$. By definition of $\| \cdot \|_{\omega}$,
$$
\| (x^{n,t} - \xStar, \mathbf{0}) \|_{\omega^n} = \omega^n \| x^{n,t} - \xStar \|_2, \quad \| (\mathbf{0}, y^{n,t} - \yStar) \|_{\omega^n} = \frac{1}{\omega^n} \| y^{n,t} - \yStar \|_2.
$$
Setting these two terms equal yields
$\omega^n = \frac{\| y^{n,t} - \yStar \|_2}{\| x^{n,t} - \xStar \|_2}$.
Of course, the quantity $\frac{\| y^{n,t} - \yStar \|_2}{\| x^{n,t} - \xStar \|_2}$ is unknown beforehand, but we attempt to estimate it using $\Delta^n_y / \Delta^n_x$. However, the quantity $\Delta^n_y / \Delta^n_x$ can change wildly from one restart to another, causing $\omega^n$ to oscillate. To dampen variations in $\omega^n$, we first move to a log-scale where the primal weight is symmetric, i.e., $\log(1 / \omega^n) = -\log(\omega^n)$, and perform a exponential smoothing with parameter $\theta \in [0,1]$. In PDLP, we use $\theta = 0.5$. 

There are several important differences between our primal weight heuristic and literature \cite{goldstein2013adaptive,goldstein2015adaptive}. For example, \cite{goldstein2013adaptive,goldstein2015adaptive} make relatively small changes to the primal weights at each iteration, attempting to balance the primal and dual residual. These changes have to be diminishingly small because, in our experience, PDHG may be unstable if they are too big.  In contrast, in our method the primal weight is only updated during restarts, which in practice allows for much larger changes without instability issues. Moreover, our scheme tries to balance the weighted distance traveled in the primal and dual rather than the residuals \cite{goldstein2013adaptive,goldstein2015adaptive}.

\subsection{Presolve}\label{sec:presolve}

Presolving refers to transformation steps that simplify the input problem before starting the optimization solver. These steps span from relatively easy transformations such as detecting inconsistent bounds, removing empty rows and columns of $K$, and removing variables whose lower and upper bounds are equal, to more complex operations such as detecting duplicate rows in $K$ and tightening bounds. Presolve is a standard component of traditional LP solvers~\cite{maros2002computational}. We are not aware of presolve being combined with PDHG for LP. However, \cite{lin2021admm, li2020asymptotically} combine presolve with other FOMs. 

As an experiment to measure the impact of presolve, we used PaPILO \cite{gamrath2020scip}, an open-source presolving library. For technical reasons, it was easier to use PaPILO as a standalone executable than as a library. We simulate its effect by simply solving the preprocessed instances. Convergence criteria are evaluated with respect to the presolved instance, not the original problem.

\subsection{Diagonal Preconditioning}\label{sec:diag}

Preconditioning is a popular heuristic in  optimization for improving the convergence of FOMs.
To avoid factorizations, we only consider diagonal preconditioners. Our goal is to rescale the constraint matrix $K=(G, A)$ to $\tK=(\tG, \tA)=D_1 K D_2$ with positive diagonal matrices $D_1$ and $D_2$, so that the resulting matrix $\tK$ is ``well balanced''. Such preconditioning creates a new LP instance that replaces $A, G, c, b, h, u$, and $l$ in \eqref{eq:lp} with
$\tG, \tA$, $\tx = D_2^{-1} x$, $\tc=D_2 c$, $(\tb,\thh)= D_1 (b,h)$, $\tu = D_2^{-1} u$ and $\tl = D_2^{-1} l$. Common choices for $D_1$ and $D_2$ include:
\begin{itemize}[leftmargin=*]
    \item \textbf{No scaling:} Solve the original LP instance \eqref{eq:lp} without additional scaling, namely $D_1=D_2=I$.
    \item \textbf{Pock-Chambolle \cite{pock2011diagonal}:} Pock and Chambolle proposed a family of diagonal preconditioners\footnote{Diagonal preconditioning is equivalent to changing to a weighted $\ell_2$ norm in the proximal step of PDHG (weight defined by $D_2$ and $D_1$ for the primal and dual respectively). Pock and Chambolle use this weighted norm perspective.} for PDHG parameterized by $\alpha$, where the diagonal matrices are defined by $(D_1)_{jj}=\sqrt{\|K_{j,\cdot}\|_{2-\alpha}}$ for $j=1,...,m_1+m_2$ and $(D_2)_{ii}=\sqrt{\|K_{\cdot,i}\|_{\alpha}}$ for $i=1,...,n$. We use $\alpha=1$ in PDLP (we also tested $\alpha=0$ and $\alpha=2$). This is the baseline diagonal preconditioner in the PDHG literature.
    \item \textbf{Ruiz~\cite{ruiz2001scaling}:} Ruiz scaling is a popular algorithm in numerical linear algebra to equilibrate matrices. In an iteration of Ruiz scaling, the diagonal matrices are defined as $(D_1)_{jj}=\sqrt{\|K_{j,\cdot}\|_{\infty}}$ for $j=1,...,m_1+m_2$ and $(D_2)_{ii}=\sqrt{\|K_{\cdot,i}\|_{\infty}}$ for $i=1,...,n$. Ruiz \cite{ruiz2001scaling} shows that if this rescaling is applied iteratively, the infinity norm of each row and each column converge to $1$.
\end{itemize}

For the default PDLP settings, we apply a combination of Ruiz rescaling~\cite{ruiz2001scaling} and the preconditioning technique proposed by Pock and Chambolle \cite{pock2011diagonal}. In particular, we apply $10$ iterations of Ruiz scaling and then apply the Pock-Chambolle scaling. To illustrate the effectiveness of our proposed scaling technique, we compare it against these three common techniques in Appendix \ref{sec:app-diagonal}.

\subsection{Theoretical guarantees for the above enhancements}
\label{sec:guarantees}
While PDLP's enhancements are motivated by theory, some of them may not preserve theoretical guarantees as discussed below:
\begin{itemize}
    \item We do not have a proof of convergence for the adaptive step size rule (Section \ref{sec:step-size-choice-body}).
    \item One can show our restart criteria (Section \ref{sec:adaptive-restarts}) preserve convergence guarantees by modifying the proof of \cite{applegate2021restarts} to a more general setting.
    \item Primal weight updates (Section \ref{sec:primal-weight-update}) do not readily preserve convergence guarantees, but we conjecture that a proof of convergence is possible if they are updated infrequently.
    \item Presolve (Section \ref{sec:presolve}) and diagonal preconditioning (Section \ref{sec:diag}) preserve theoretical guarantees because they can be viewed as applying PDHG to an LP instance with different data. 
\end{itemize}

\section{Numerical experiments}
\label{sec:experiments}
Our numerical experiments study the effectiveness of PDLP primarily with respect to traditional LP applications and benchmark sets.
Section~\ref{sec:setup} describes the setup for the experiments. Section~\ref{sec:improvement-measured-impact} demonstrates PDLP's improvements over baseline PDHG. Section~\ref{sec:baselines} compares PDLP with other FOMs. Section~\ref{sec:medium-gurobi} highlights benchmark instances where PDLP outperforms a commercial LP solver. Finally, Section~\ref{sec:pagerank} illustrates the ability of PDLP to scale to a large application where barrier and simplex-based solvers run out of memory. The supplemental materials contain extensive ablation studies and additional instructions for reproducing the experiments.

\subsection{Experimental setup}\label{sec:setup}

\paragraph{Optimality termination criteria.} PDLP terminates with an approximately optimal solution when the primal-dual iterates $x \in X$, $y \in Y$, $\lambda \in \Lambda$, satisfy:
\begin{subequations}\label{eq:opt-termination}
\begin{flalign}
\label{eq:dual-gap-feas}
\abs{q^\T y + l^\T \lambda^{+} - u^\T \lambda^{-} - c^\T x  } &\le \epsilon  (1 + \abs{ q^\T y + l^\T \lambda^{+} - u^\T \lambda^{-} } + \abs{c^\T x}) \\
\label{eq:primal-feas}
\left\| \begin{pmatrix} A x - b \\
(h - G x)^{+} \end{pmatrix} \right\|_2 &\le \epsilon  (1 + \| q \|_2) \\
\label{eq:dual-feas}
\| c - K^\T y - \lambda \|_2 &\le \epsilon (1 + \| c \|_2)
\end{flalign}
\end{subequations}
where $\epsilon \in (0,\infty)$ is the termination tolerance. Note that if \eqref{eq:opt-termination} is satisfied with $\epsilon = 0$, then by LP duality we have found an optimal solution \cite{goldmantucker}. Indeed, \eqref{eq:dual-gap-feas} is the duality gap, \eqref{eq:primal-feas}
is primal feasibility, and \eqref{eq:dual-feas} is dual feasibility. We use these criteria to be consistent with those of SCS~\cite{scs}. The PDHG algorithm does not explicitly include a reduced costs variable $\lambda$. Therefore,
to evaluate the optimality termination criteria we compute $\lambda = \proj_{\Lambda}(c - K^\T y)$. All instances considered have an optimal primal-dual solution. We use $\epsilon = 10^{-8}$ as a benchmark for high-quality solutions and $\epsilon = 10^{-4}$ for moderately accurate solutions.

\paragraph{Benchmark datasets.}We use three datasets to compare algorithmic performance.
One is the \texttt{LP benchmark} dataset of 56 problems, formed by merging the instances from ``Benchmark of Simplex LP Solvers'', ``Benchmark of Barrier LP solvers'', and ``Large Network-LP Benchmark'' from \cite{mittelmannbenchmark}.
We also created a larger benchmark of 383 instances curated from LP relaxations of mixed-integer programming problems from the MIPLIB2017 collection~\cite{gleixner2021miplib} (see Appendix~\ref{sec:currated-miplib-dataset}) that we label \texttt{MIP Relaxations}. \texttt{MIP Relaxations} was used extensively during algorithmic development, e.g., for hyperparameter choices; we held out \texttt{LP benchmark} as a test set.
Finally, we also performed some experiments on the \texttt{Netlib} LP benchmark~\cite{netlib}, an historically important benchmark that is no longer state of the art for large-scale LP.

\paragraph{Software.} PDLP is implemented in an open-source Julia~\cite{bezanson2017julia} module available at \url{https://github.com/google-research/FirstOrderLp.jl}. The module also contains a baseline implementation of the extragradient method with many of the same enhancements as PDLP (labeled `Enh. Extragradient'). We compare with two external packages: SCS~\cite{scs} version 2.1.3, an open-source generic cone solver based on ADMM, and Gurobi version 9.0.1, a state-of-the-art commercial LP solver. SCS supports two modes for solving the linear system that arises at each iteration, a direct method based on a cached LDL factorization (which is the default `SCS') and an indirect method based on the conjugate gradient method (which we label `SCS (matrix-free)'). All solvers are run single-threaded. SCS and Gurobi are provided the same presolved instances as PDLP.

\paragraph{Computing environment.} We used two computing environments for our experiments: 1) \texttt{e2-highmem-2} virtual machines (VMs) on Google Cloud Platform (GCP). Each VM provides two virtual CPUs and 16GB RAM. 2) A dedicated workstation with an Intel Xeon E5-2669 v3 processor and 128 GB RAM. This workstation has a license for Gurobi that permits at most one concurrent solve. Total compute time on GCP for all preliminary and final experiments was approximately $72,000$ virtual CPU hours.

\paragraph{Initialization.} All first-order methods use all-zero vectors as the initial starting points.

\paragraph{Metrics.} We use the term \textit{KKT passes} to refer to the number of matrix multiplications by both $K$ and $K^\T$.
Given that the most expensive operation in our algorithm is matrix-vector multiplication, this metric is less noisy than runtime for comparing performance between matrix-free solvers. 
SGM10 stands for shifted geometric mean with shift $10$, which is computed by adding $10$ to all data points, taking the geometric mean, and then subtracting $10$. Unsolved instances are assigned values corresponding to the limits specified in the next paragraph.

\paragraph{Time and KKT pass limits.} For Section~\ref{sec:improvement-measured-impact} we impose a limit on the KKT passes of $100,000$. For Section~\ref{sec:baselines} we impose a time limit of 1 hour.

\begin{figure}
\begin{subfigure}{.33\textwidth}
\centering\includegraphics[width=1.0\linewidth]{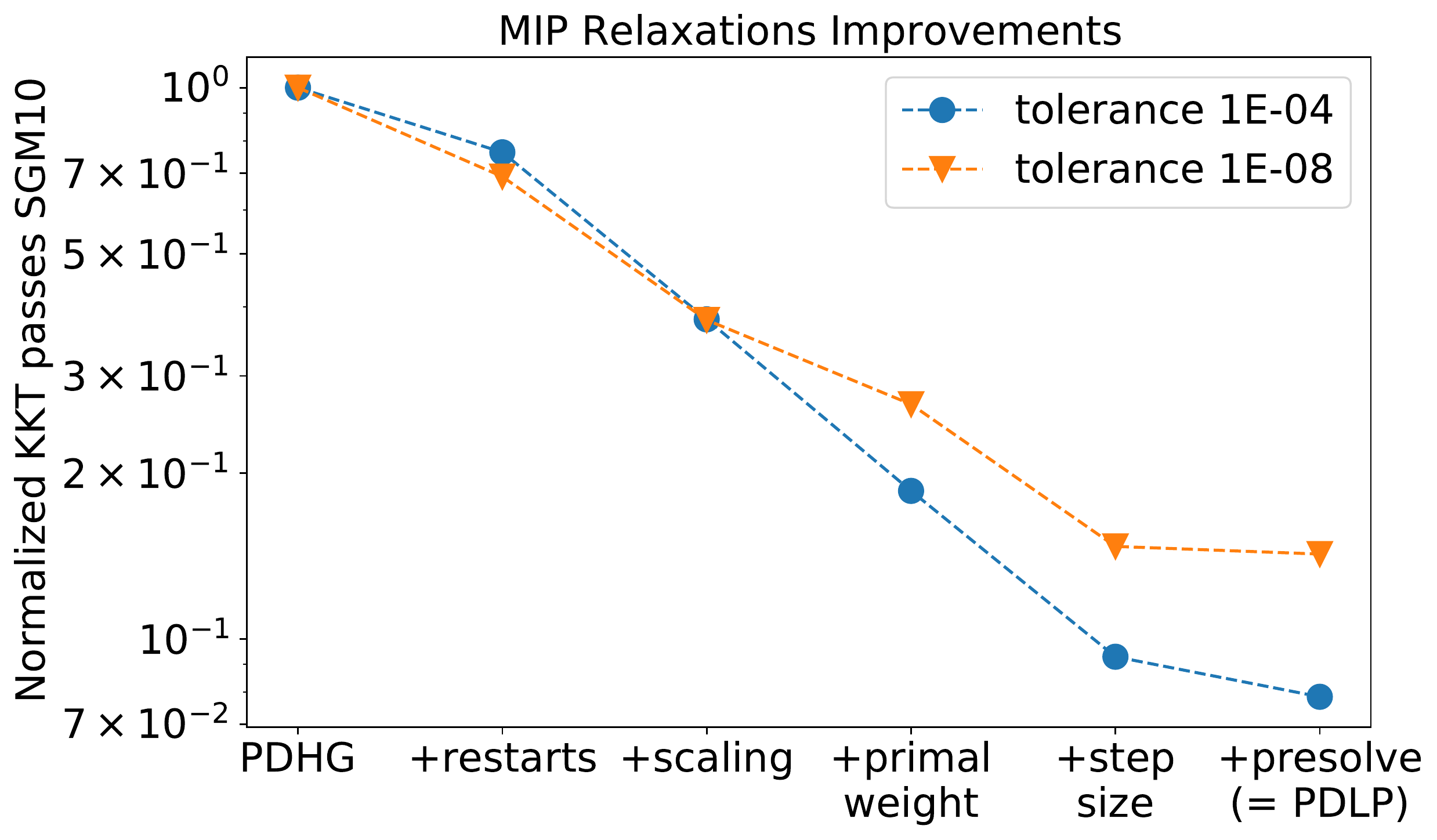}
\caption{\texttt{MIP Relaxations}}\label{fig:miplib-improvements}
\end{subfigure}
\begin{subfigure}{.33\textwidth}
\centering\includegraphics[width=1.0\linewidth]{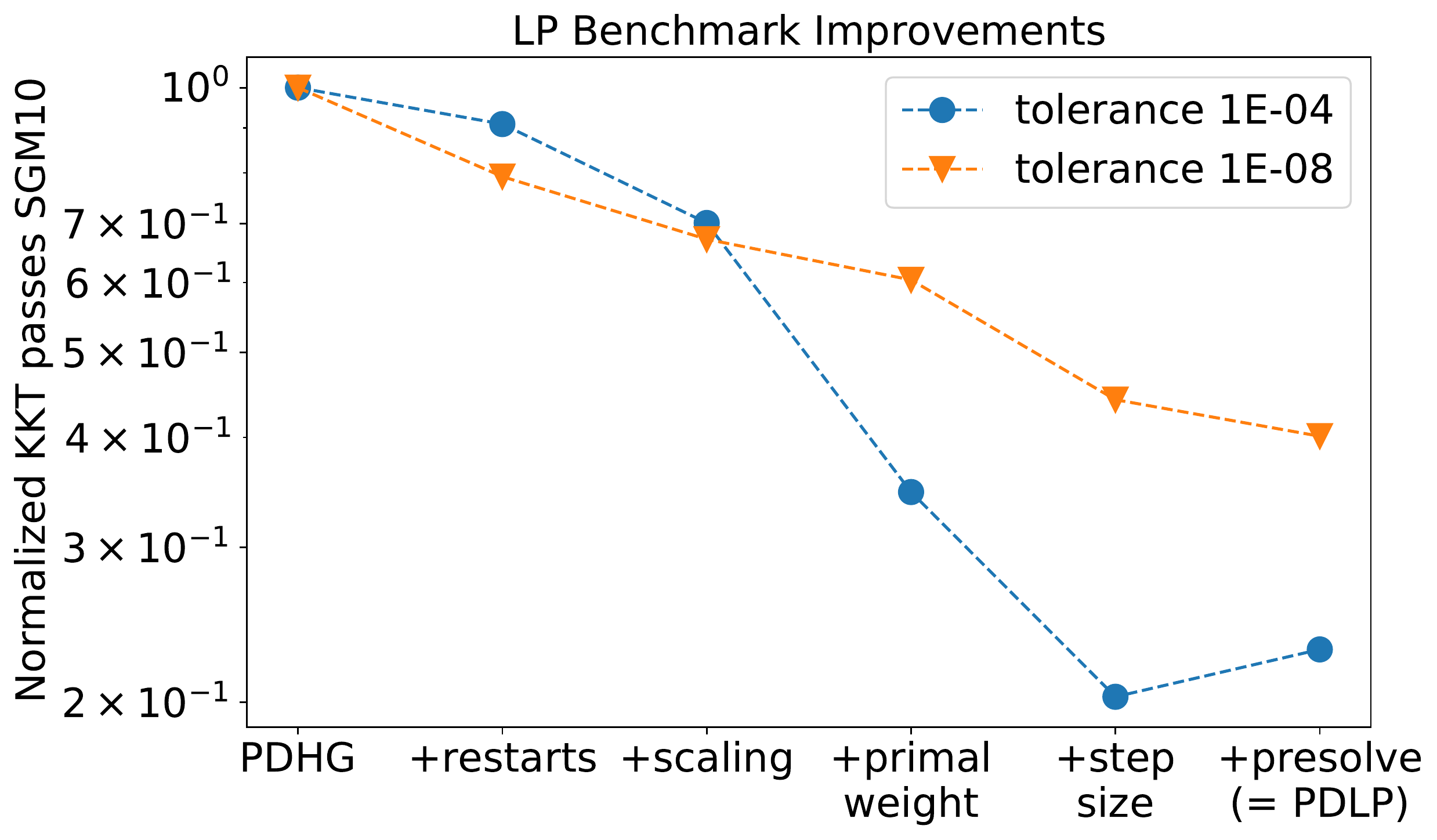}
\caption{\texttt{LP benchmark}}\label{fig:lpbenchmark-improvements}
\end{subfigure}
\begin{subfigure}{.33\textwidth}
\centering\includegraphics[width=1.0\linewidth]{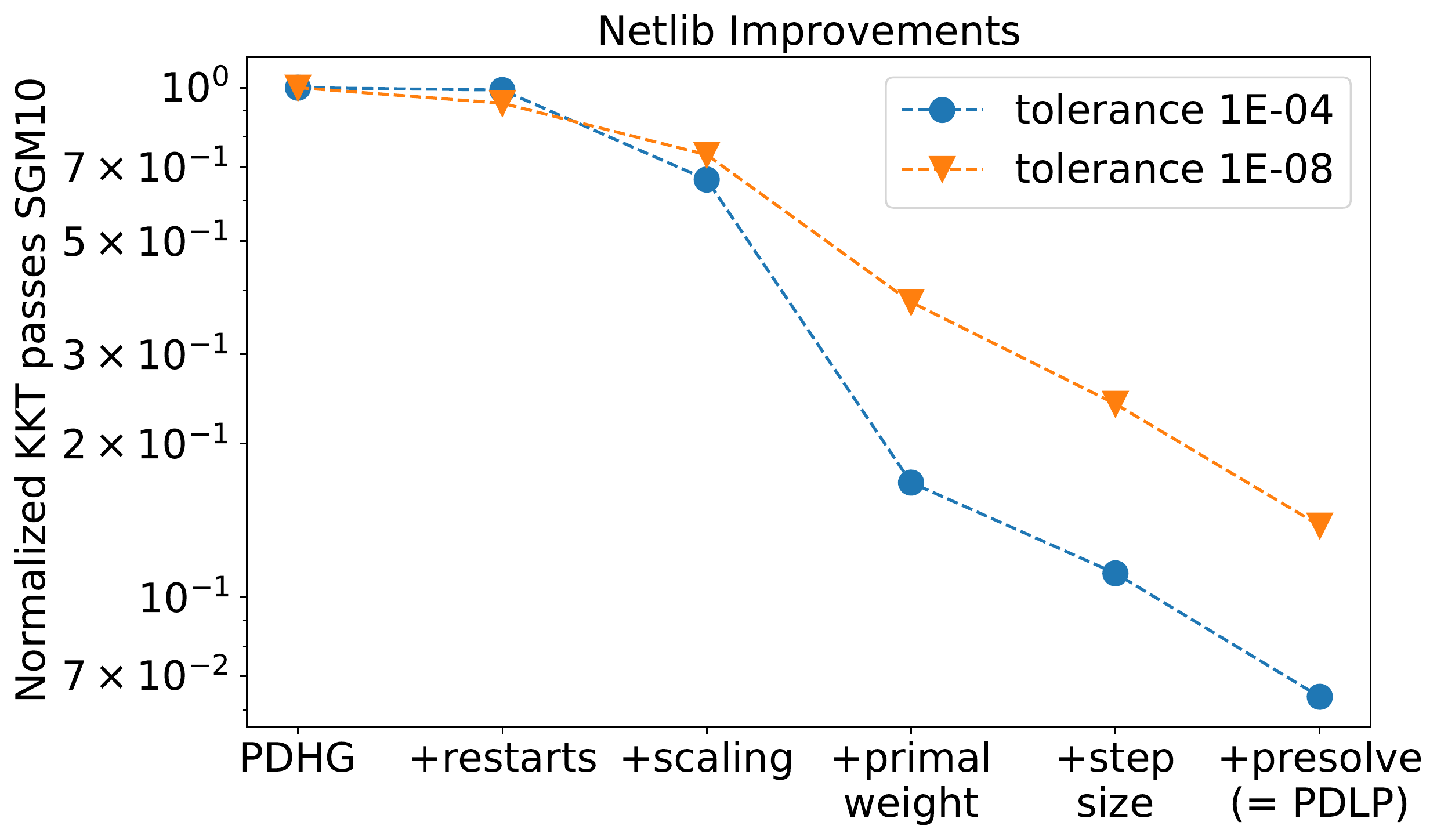}
\caption{\texttt{Netlib}}\label{fig:netlib-improvements}
\end{subfigure}
\caption{Summary of relative impact of PDLP's improvements}
\label{fig:pdlp-improvements}
\end{figure}

\subsection{Impact of PDLP's improvements}\label{sec:improvement-measured-impact}

The y-axes of Figure~\ref{fig:pdlp-improvements}
display the SGM10 of the KKT passes normalized by the value for baseline PDHG.
We can see, with the exception of presolve for \texttt{LP benchmark} at tolerance $10^{-4}$, each of our modifications described in Section~\ref{sec:practical-algorithmic-improvements} improves the performance of PDHG.

\subsection{Comparison with other first-order baselines}\label{sec:baselines}

We compared PDLP with several other first-order baselines: SCS~\cite{scs}, in both direct (default) mode and matrix-free mode, and our enhanced implementation of the extragradient method \cite{korpelevich1976extragradient,nemirovski2004prox}. For SCS in matrix-free mode, we include the KKT passes from the conjugate gradient solves; for SCS in direct mode there is no reasonable measure of KKT passes for the factorization and direct solve, so we only measure running time.  The comparisons are summarized in Figure~\ref{fig:pdlp-vs-baseline}.

\begin{figure}
\begin{subfigure}{.24\textwidth}
  \centering
  \includegraphics[width=.95\linewidth]{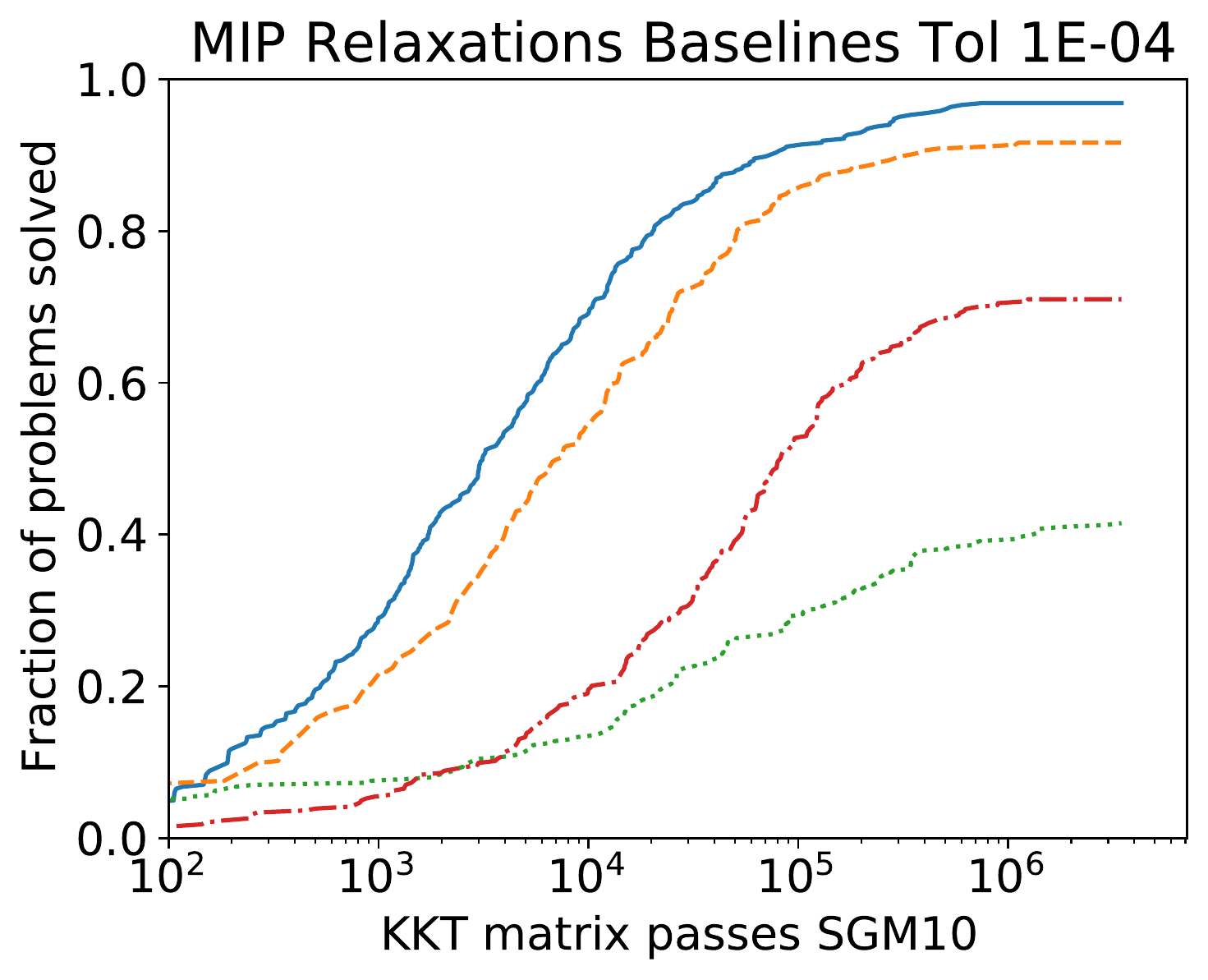}
\end{subfigure}%
\begin{subfigure}{.24\textwidth}
  \centering
  \includegraphics[width=.95\linewidth]{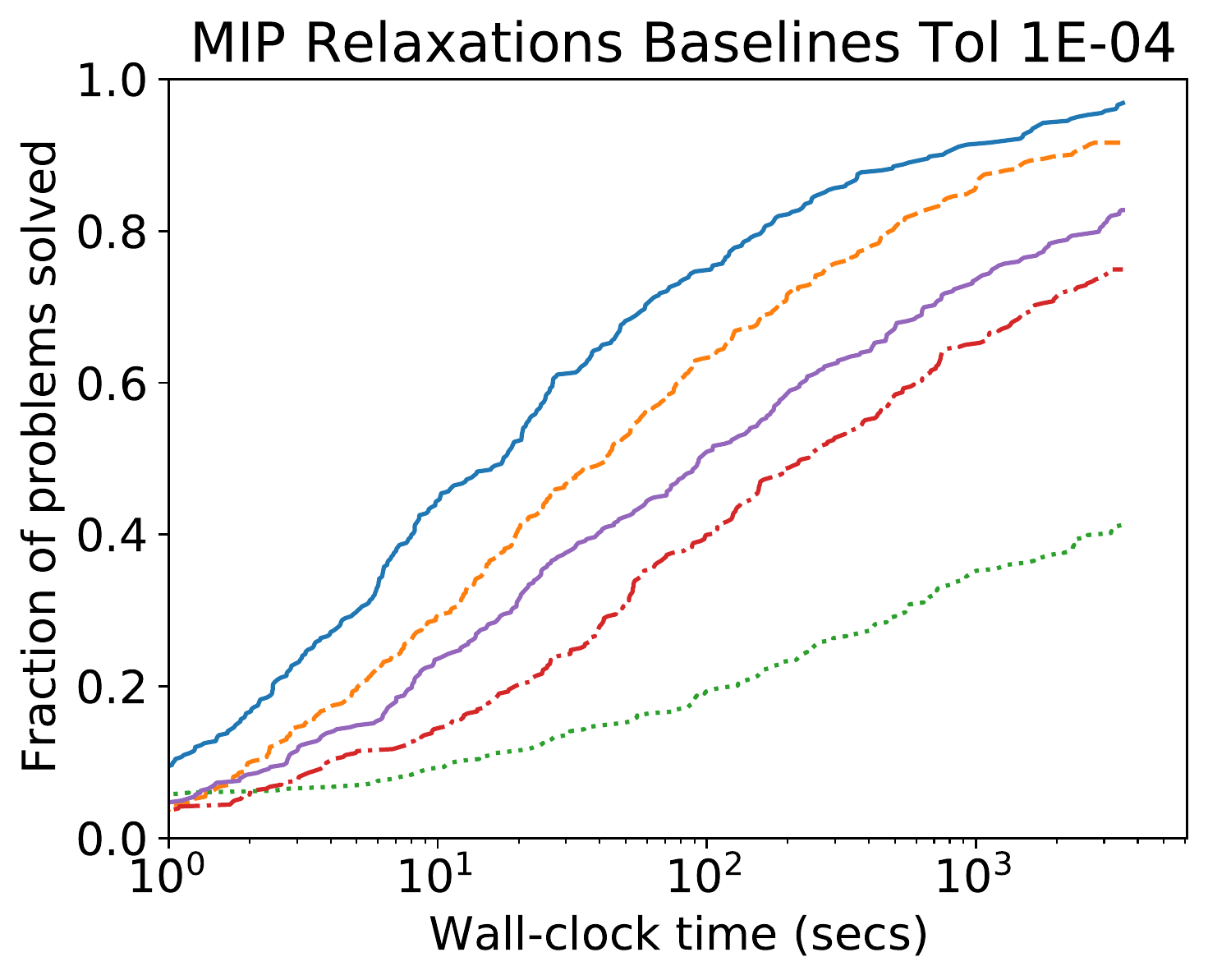}
\end{subfigure}
\begin{subfigure}{.24\textwidth}
  \centering
  \includegraphics[width=.95\linewidth]{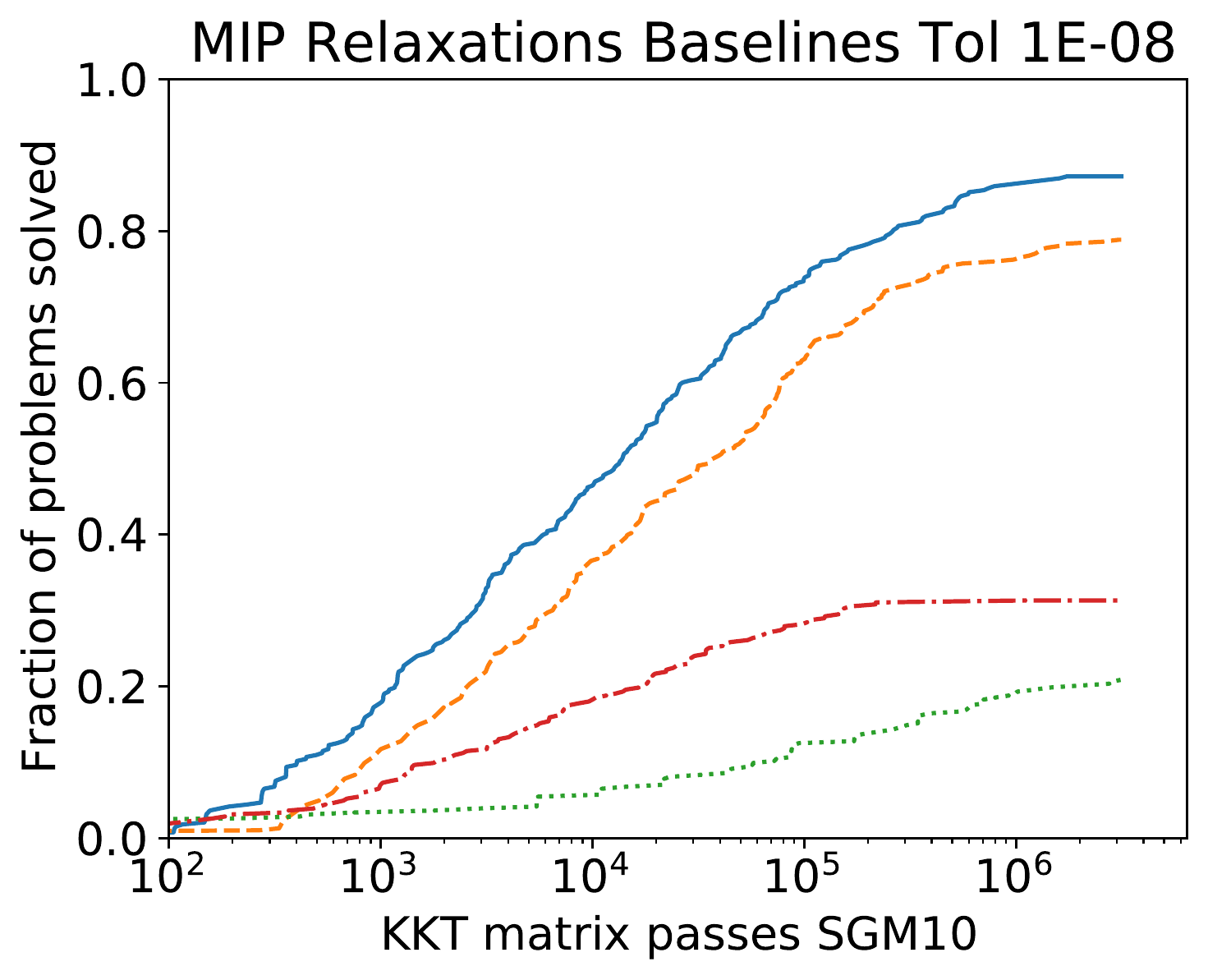}
\end{subfigure}
\begin{subfigure}{.24\textwidth}
  \centering
  \includegraphics[width=.95\linewidth]{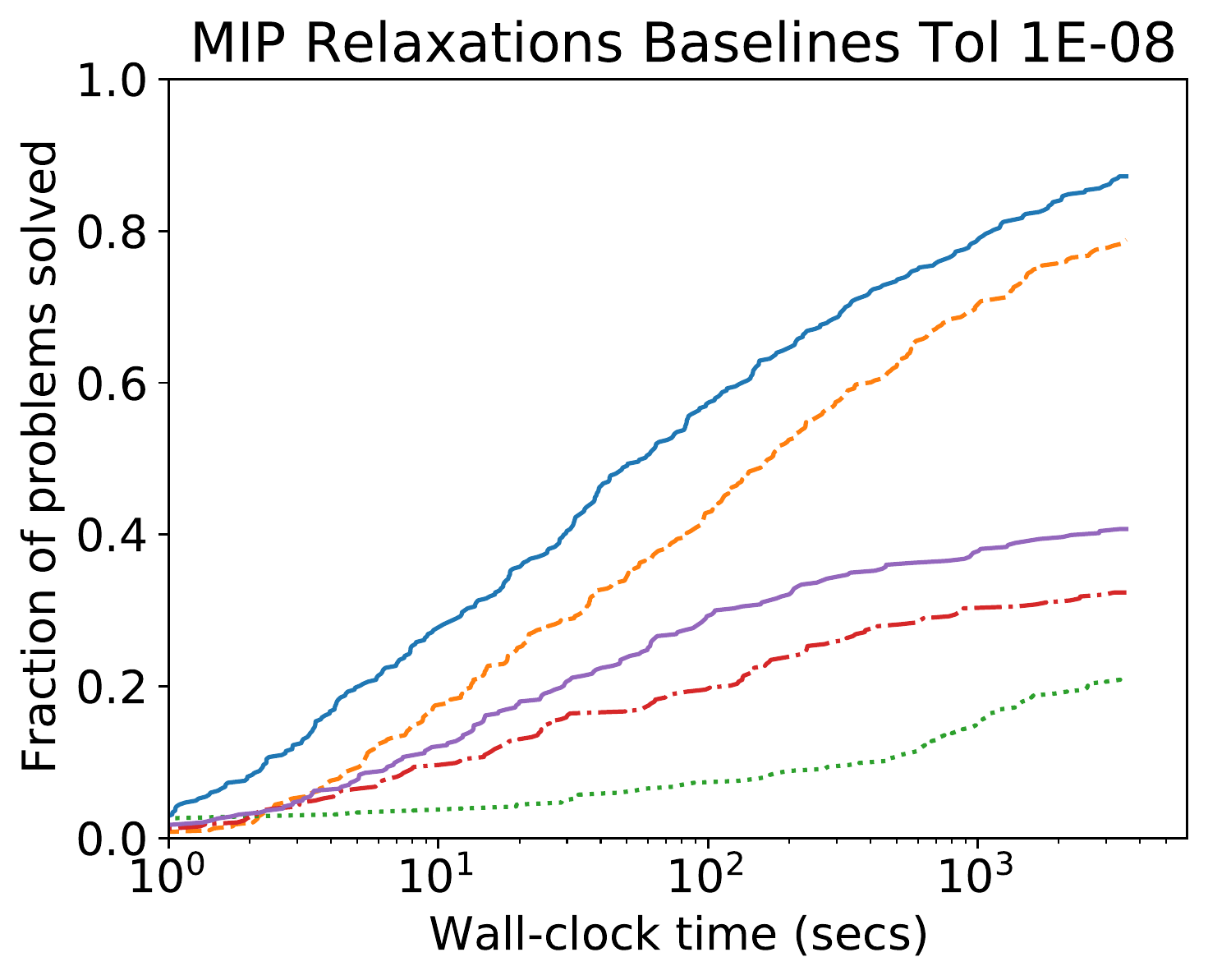}
\end{subfigure}
\\
\begin{subfigure}{.24\textwidth}
  \centering
  \includegraphics[width=.95\linewidth]{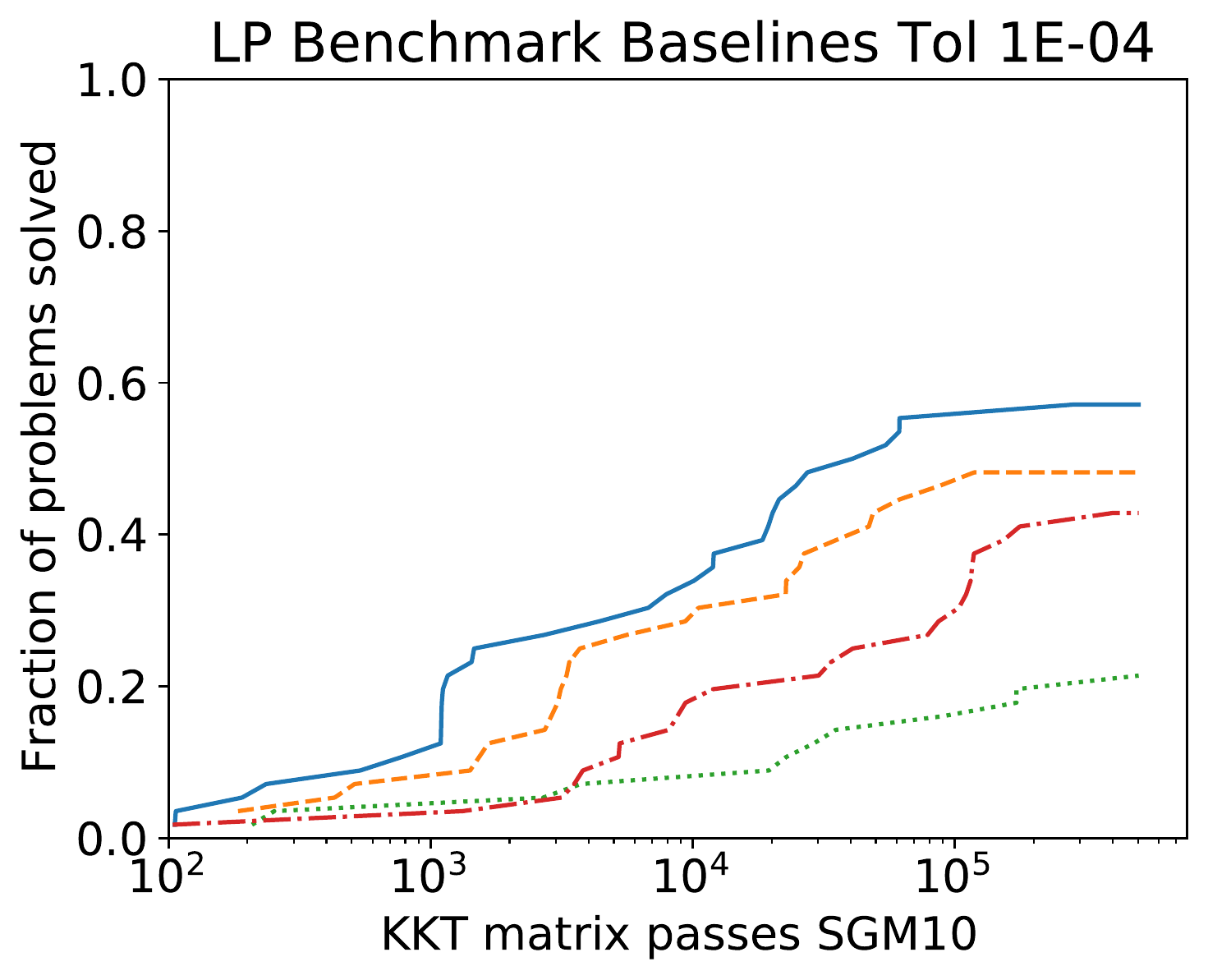}
\end{subfigure}%
\begin{subfigure}{.24\textwidth}
  \centering
  \includegraphics[width=.95\linewidth]{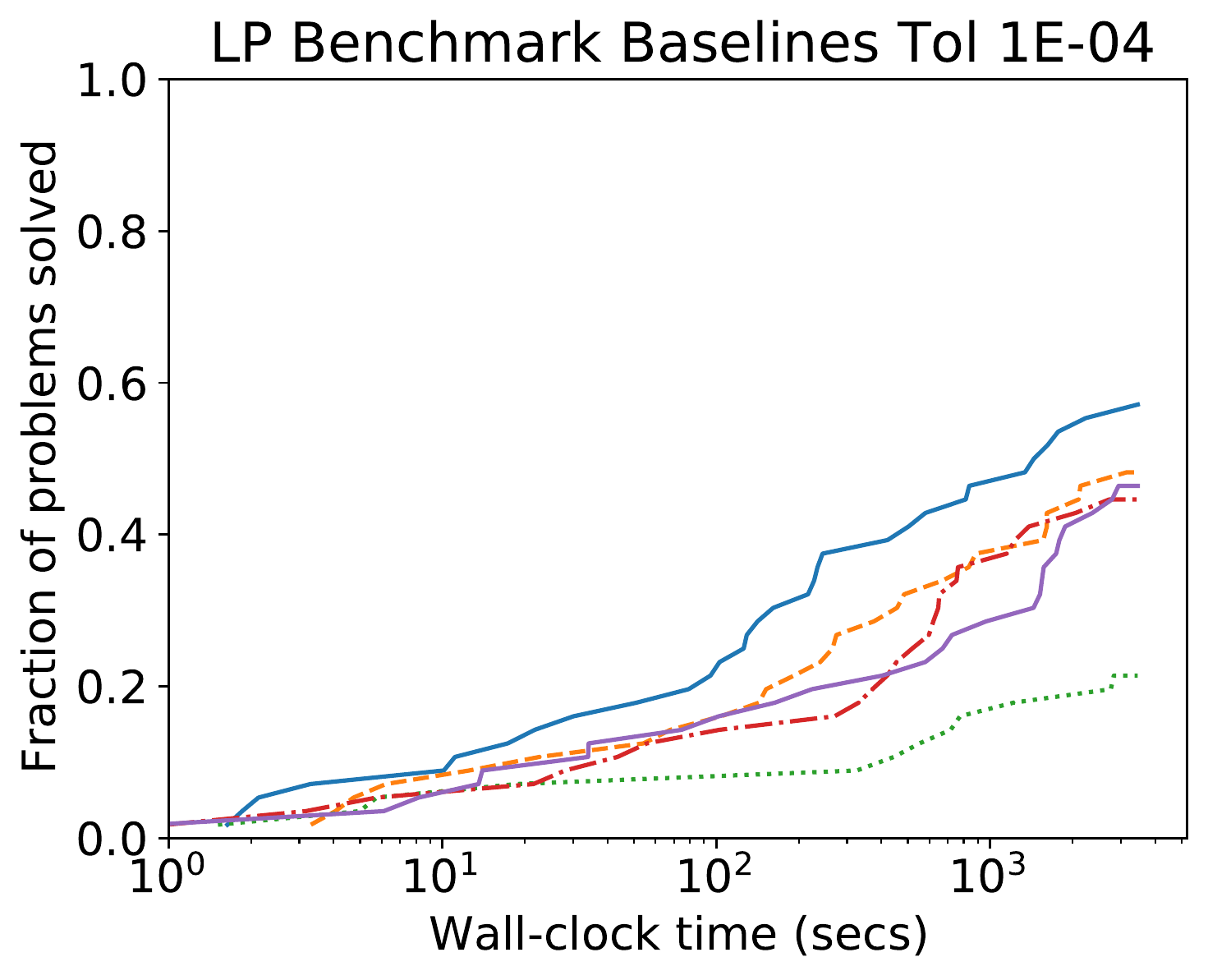}
\end{subfigure}
\begin{subfigure}{.24\textwidth}
  \centering
  \includegraphics[width=.95\linewidth]{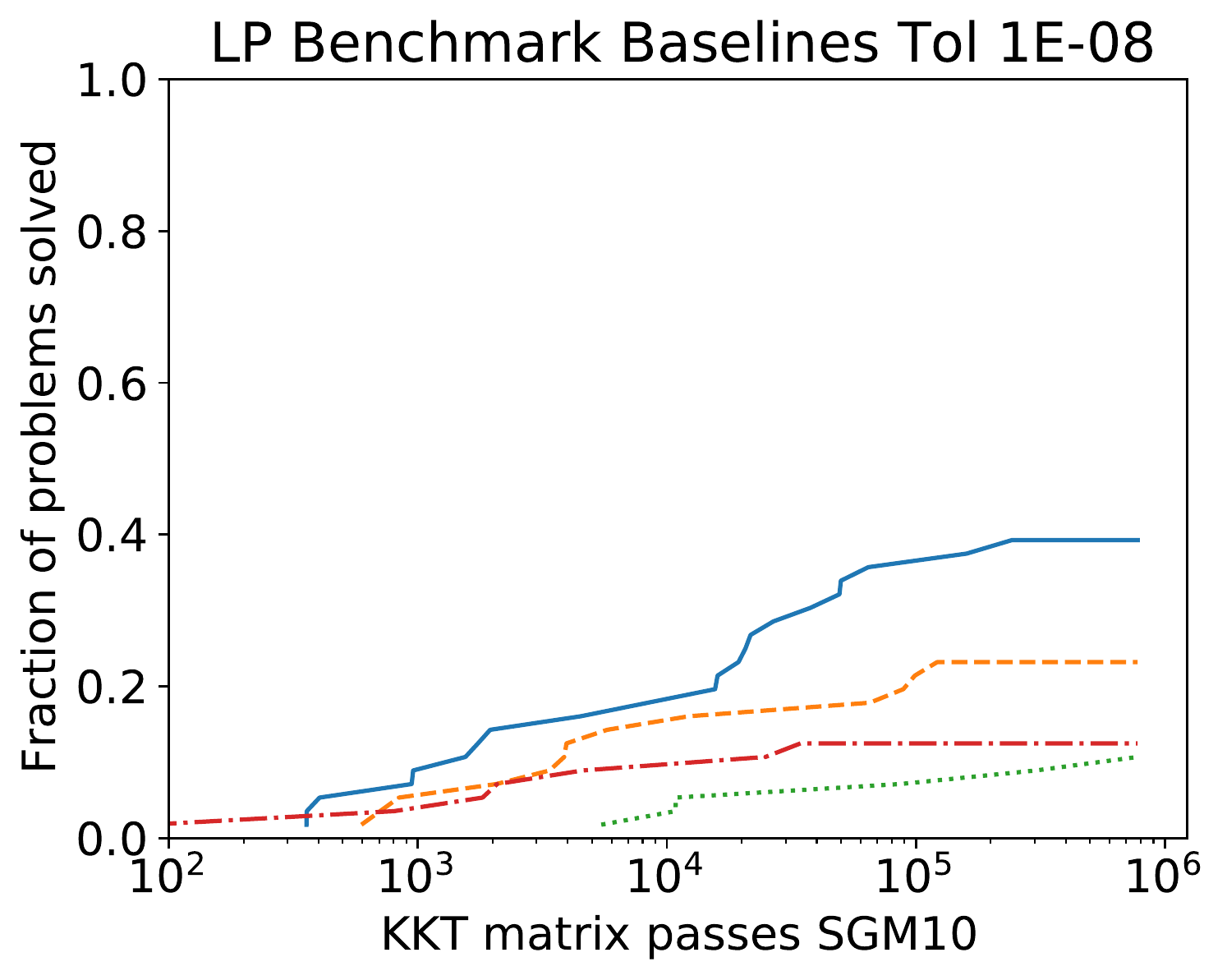}
\end{subfigure}
\begin{subfigure}{.24\textwidth}
  \centering
  \includegraphics[width=.95\linewidth]{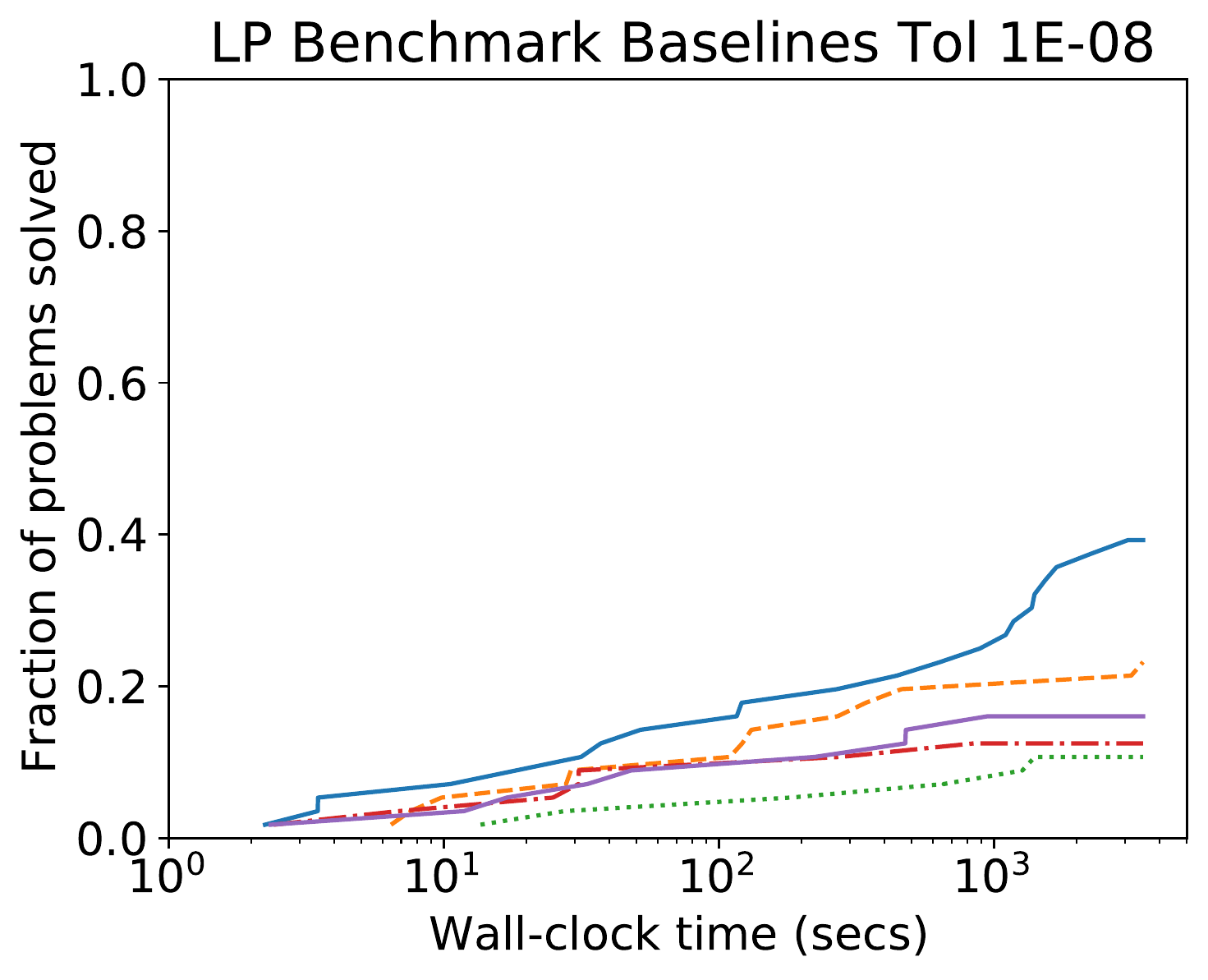}
\end{subfigure}
\\
\begin{subfigure}{.24\textwidth}
  \centering
  \includegraphics[width=.95\linewidth]{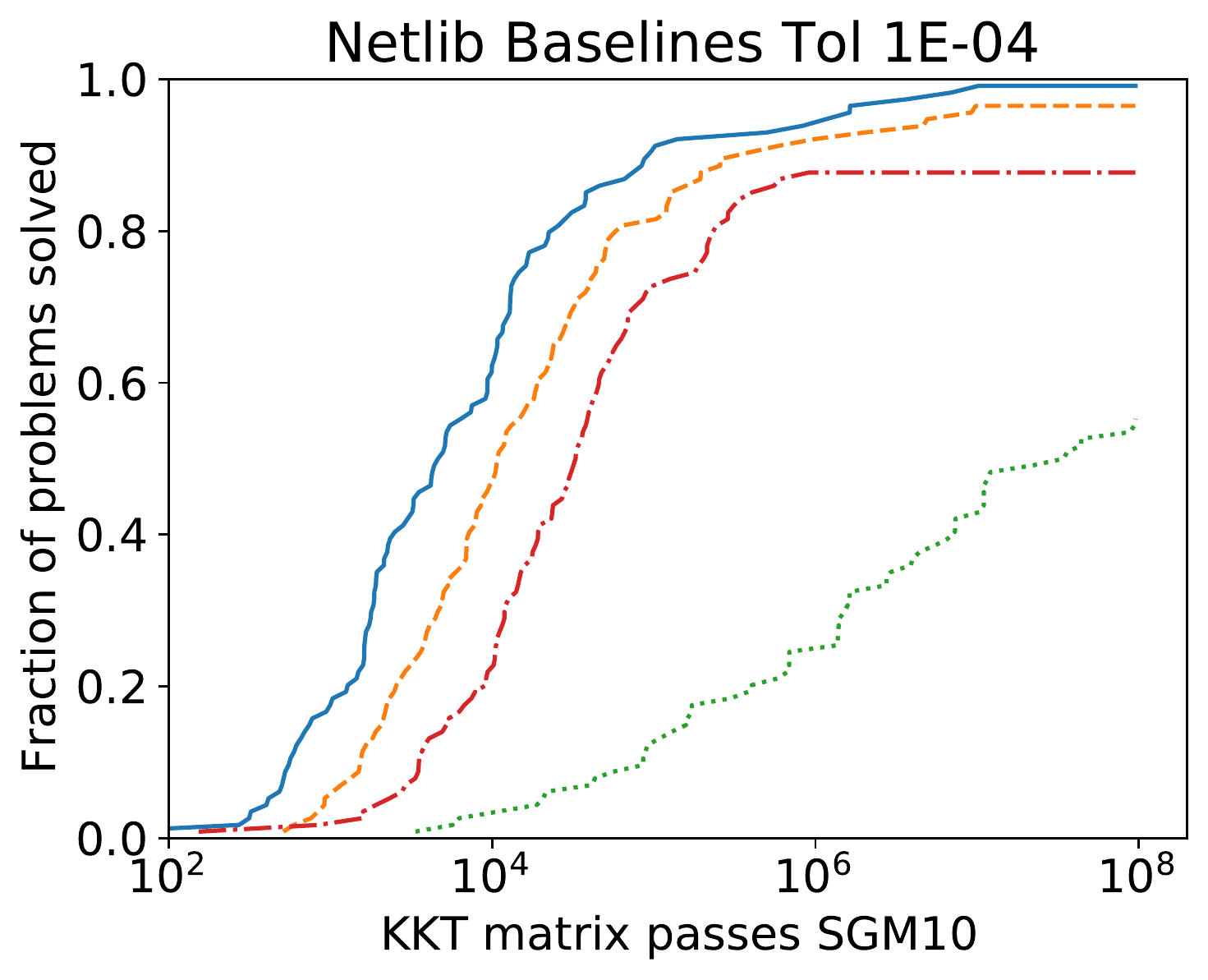}
\end{subfigure}%
\begin{subfigure}{.24\textwidth}
  \centering
  \includegraphics[width=.95\linewidth]{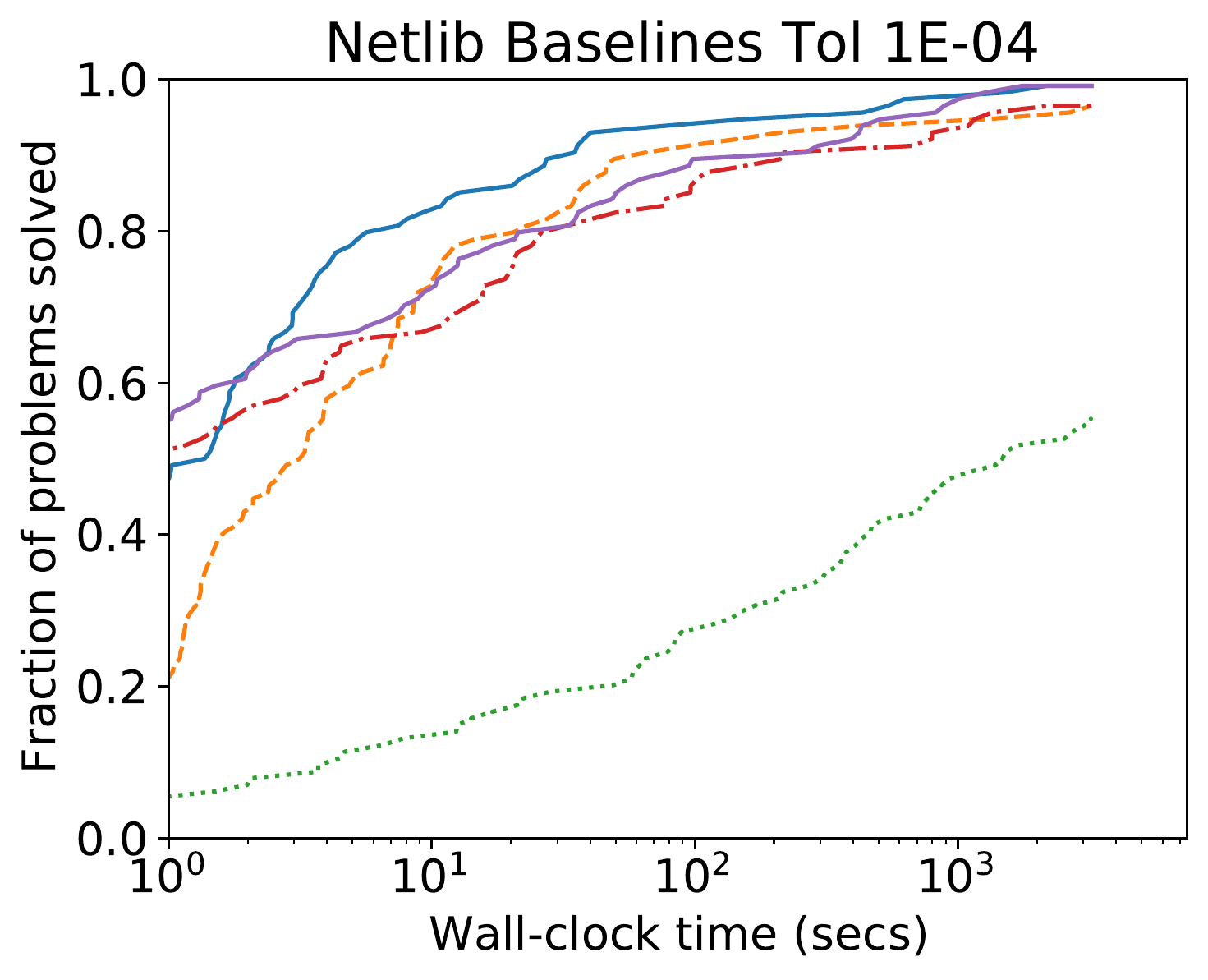}
\end{subfigure}
\begin{subfigure}{.24\textwidth}
  \centering
  \includegraphics[width=.95\linewidth]{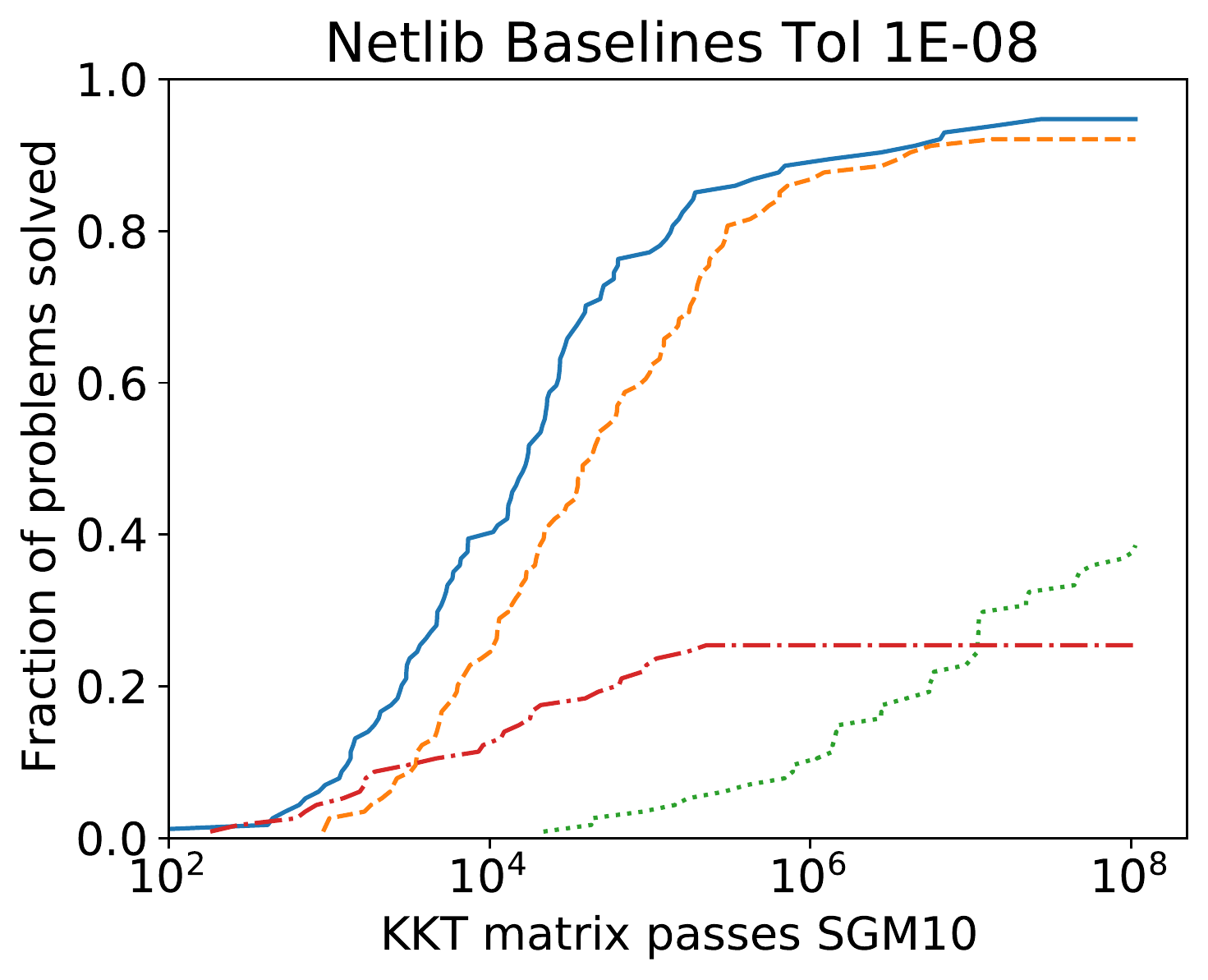}
\end{subfigure}
\begin{subfigure}{.24\textwidth}
  \centering
  \includegraphics[width=.95\linewidth]{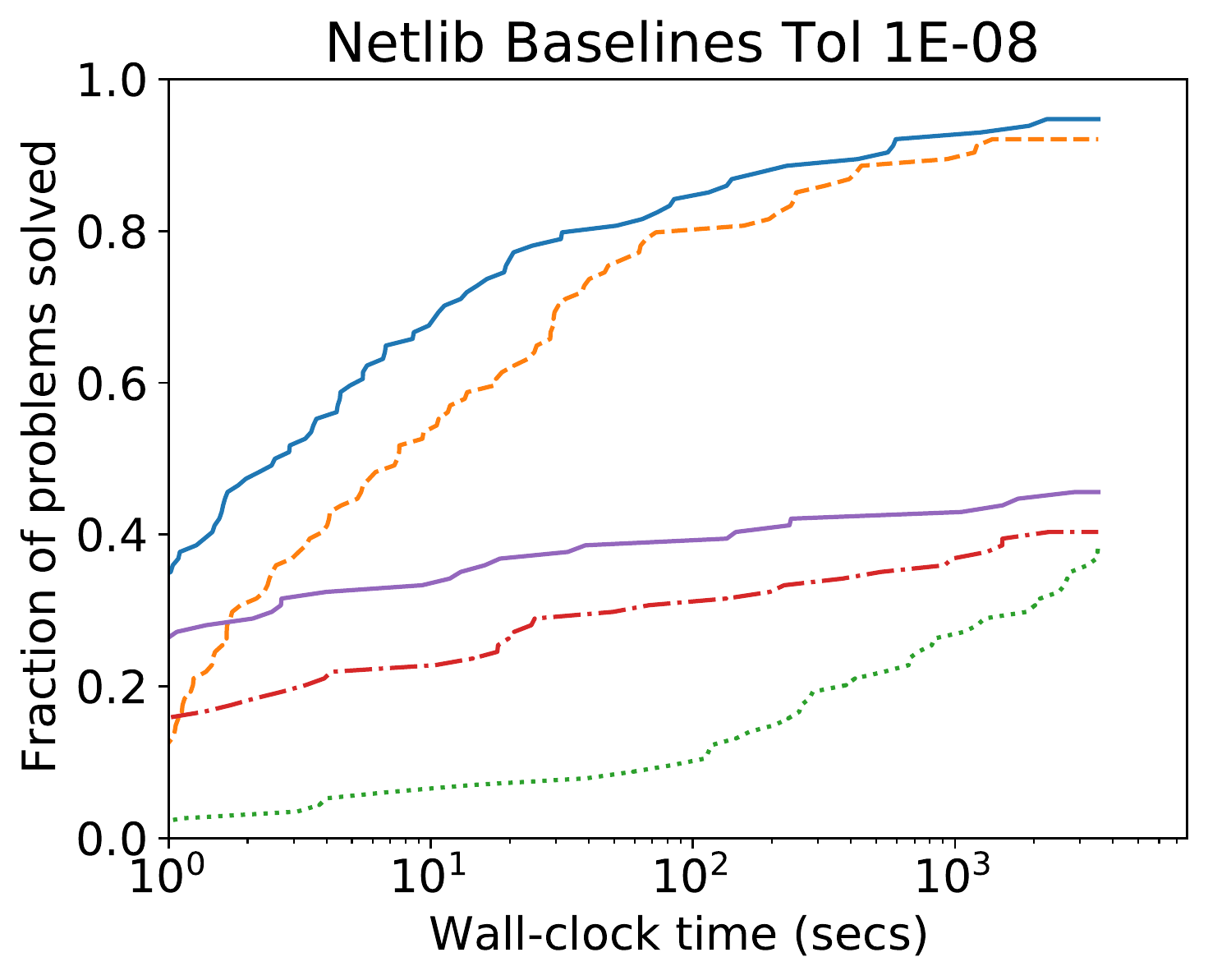}
\end{subfigure}\\
\begin{subfigure}{\textwidth}
\centering
\includegraphics[width=.95\linewidth]{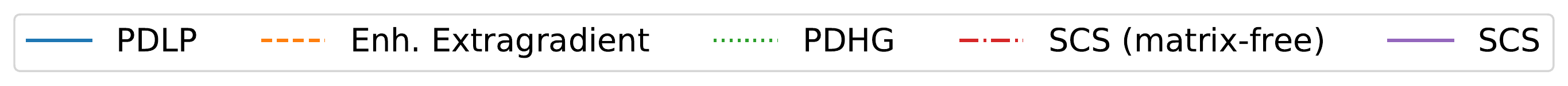}
\end{subfigure}
\caption{Number of problems solved for \texttt{MIP Relaxations} (top), \texttt{LP benchmark} (middle), and \texttt{Netlib} (bottom) datasets.}
\label{fig:pdlp-vs-baseline}
\end{figure}

\subsection{PDLP versus simplex and barrier}\label{sec:medium-gurobi}

In this section, we test the performance of PDLP against the three methods available in Gurobi: barrier, primal simplex, and dual simplex. By default when provided multiple threads, Gurobi runs these three methods concurrently and terminates when the first method completes. 
 We used default termination for Gurobi and set $\epsilon = 10^{-8}$ for PDLP.
We ran experiments with instances from the \texttt{MIP Relaxations} and \texttt{LP benchmark}.
Although, for most instances, Gurobi outperforms PDLP, we found problems for which PDLP exhibits moderate to significant gains. Table~\ref{table:PDLP-close-gurobi} gives examples of instances where our prototype implementation is within a factor of two of the best of the three Gurobi methods.  While further improvements are needed for PDLP to truly compete with the portfolio of methods that Gurobi offers, we interpret these results as evidence that PDLP itself could be of value in this portfolio.

\begin{table}
\centering
\caption{Instances from \texttt{MIP Relaxations} (top) and \texttt{LP benchmark} (bottom) where PDLP is within a factor of 2 of the best of all Gurobi methods. Time to solve in seconds.} 
\label{table:PDLP-close-gurobi}
\begin{tabular}{lcccc}
\toprule
        Instance &  PDLP & Gurobi Barrier  & Gurobi Primal Simp. &Gurobi Dual Simp.   \\
\midrule
ex9 & 1.6 & 102.6 & 181.3 & 47.6 \\
genus-sym-g62-2 & 2.1 & 10.7 & 6.7 & 33.2 \\
highschool1-aigio  & 72.6 & 243.8 & \textgreater 3600 & \textgreater 3600 \\
neos-578379 & 1.4 & 0.7 & 1.7 & 1.8 \\
rwth-timetable & 1870.3 &  \textgreater 3600 &  \textgreater3600 & >3600 \\
\midrule[0.25pt]
ex10 & 4.9 & 63.1 & 16.8 & 7.9 \\
nug08-3rd & 2.2 & 3.2 & 2219.2 & 24.1\\
savsched1 & 35.9 & 25.9 & 56.0 & 261.3\\
\bottomrule
\end{tabular}
\end{table}

\subsection{Large-scale application: PageRank}\label{sec:pagerank}
Nesterov~\cite[equation (7.3)]{nesterov2014pagerank} gives an LP formulation of the standard ``PageRank'' problem. Although the LP formulation is not the best approach to computing PageRank, it is a source of very large instances.
For a random scalable collection of PageRank instances, we used Barabási-Albert~\cite{barabasi1999graph} preferential attachment graphs with approximately three edges per node; see Appendix~\ref{app:large} for details. The results are summarized in Table~\ref{t:pagerank}.
\begin{table}
\centering
\caption{Solve time for PageRank instances. Gurobi barrier has crossover disabled, 1 thread. PDLP and SCS solve to $10^{-8}$ relative accuracy. SCS is matrix-free. Baseline PDHG is unable to solve any instances. Presolve not applied. OOM = Out of Memory. The number of nonzero coefficients per instance is $8 \times (\text{\# nodes}) - 18$.}
\label{t:pagerank}
\begin{tabular}{lccccc}
\toprule
        \# nodes  &  PDLP & SCS & Gurobi Barrier  & Gurobi Primal Simp. &Gurobi Dual Simp.   \\
\midrule
$10^4$  &7.4 sec. & 1.3 sec. & 36 sec. & 37 sec. & 114 sec. \\
$10^5$  &35 sec. & 38 sec. & 7.8 hr. & 9.3 hr. & >24 hr.\\
$10^6$   &11 min. & 25 min. &OOM&  \textgreater 24 hr.& -\\
$10^7$  &5.4 hr. & 3.8 hr. &-&-& -\\
\bottomrule
\end{tabular}
\end{table}

\section{Conclusions and future work}\label{sec:futurework}

We find our experimental results encouraging for the application of FOMs like PDHG to LP. At a minimum, they provide evidence against the claim that FOMs are useful only when moderately accurate solutions are desired. The practical success of our heuristics that lack theoretical guarantees provides fresh motivation for theoreticians to study these methods. It is important, as well, to understand what drives the difficulty of some instances and how they could be transformed to solve more quickly. We hope the community will use the benchmarks and baselines released with this work as a starting point for further investigating new FOMs for LP. With additional algorithmic and implementation refinements, we believe that PDLP or similar approaches could become part of the standard toolkit for linear programming.

\section*{Acknowledgements}
We thank Yura Malitsky for advice on parameter choices for the line search rule of~\cite{malitsky2018linesearch}.
\bibliographystyle{plain}
\bibliography{main}
\appendix

\section{Proof of scale invariance of primal weight initialization scheme}\label{app:scale-invariance}

\begin{proposition}\label{prop:scale-invariance}
Suppose that $\hat{K} = \gamma K$, $\hat{c} = \gamma \alpha_{y} c$, $\hat{q} = \gamma \alpha_{x} q$, $\hat{l} = \alpha_{x} l$, and $\hat{u} = \alpha_{x} u$ for $\alpha_{y}, \alpha_{x}, \gamma \in (0,\infty)$ with $\| c \|_2, \| q \|_2, \| \hat{c} \|_2, \| \hat{q} \|_2 > \epsMach$.
Consider the PDHG algorithm given in \eqref{eq:standard-pdhg} with $\omega = \InitializePrimalWeight{c, q}$.
Let $z^{k}$ be the PDHG iterates on the original problem and $\hat{z}^k$ be the PDHG iterates on the scaled problem with $\hat{x}^{0} = \alpha_{x} x^{0}$ and $\hat{y}^{0} = \alpha_{y} y^{0}$, then:
$\hat{x}^{k} = \alpha_{x} x^{k}, \hat{y}^{k} = \alpha_{y} y^{k}$ for all $k \in \{ 0 \} \cup \N$.
\end{proposition}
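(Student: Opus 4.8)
The plan is to prove the claim by induction on $k$, since the base case $\hat x^0 = \alpha_x x^0$, $\hat y^0 = \alpha_y y^0$ is supplied by hypothesis. Before the inductive step, I would first record how each quantity in \eqref{eq:standard-pdhg} transforms under the scaling. Because $\|\hat c\|_2 = \gamma\alpha_y\|c\|_2$ and $\|\hat q\|_2 = \gamma\alpha_x\|q\|_2$, and all four norms exceed $\epsMach$, the initialization rule lands in its first branch and gives $\hat\omega = \|\hat c\|_2/\|\hat q\|_2 = (\alpha_y/\alpha_x)\,\omega$. Likewise $\|\hat K\|_2 = \gamma\|K\|_2$, so the step size $\eta = 0.9/\|K\|_2$ becomes $\hat\eta = \eta/\gamma$. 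Combining these, the reparameterized step sizes satisfy $\hat\tau = \hat\eta/\hat\omega = (\alpha_x/(\gamma\alpha_y))\,\tau$ and $\hat\sigma = \hat\omega\hat\eta = (\alpha_y/(\gamma\alpha_x))\,\sigma$. The whole argument then hinges on the fact that these scaling factors are exactly the ones needed to cancel the $\gamma,\alpha_x,\alpha_y$ introduced into the data.

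For the inductive step, assume $\hat x^k = \alpha_x x^k$ and $\hat y^k = \alpha_y y^k$. For the primal update I would substitute into $\hat x^{k+1} = \proj_{\hat X}(\hat x^k - \hat\tau(\hat c - \hat K^\top \hat y^k))$ and simplify the argument:
\[
\hat x^k - \hat\tau(\hat c - \hat K^\top \hat y^k) = \alpha_x x^k - \tfrac{\alpha_x}{\gamma\alpha_y}\tau\bigl(\gamma\alpha_y c - \gamma\alpha_y K^\top y^k\bigr) = \alpha_x\bigl(x^k - \tau(c - K^\top y^k)\bigr),
\]
where every factor of $\gamma$ and $\alpha_y$ cancels. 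The dual update is handled identically: substituting $\hat x^{k+1} = \alpha_x x^{k+1}$ (just established) and $\hat y^k = \alpha_y y^k$ into $\hat y^{k+1} = \proj_{\hat Y}(\hat y^k + \hat\sigma(\hat q - \hat K(2\hat x^{k+1} - \hat x^k)))$ collapses the argument to $\alpha_y\bigl(y^k + \sigma(q - K(2x^{k+1} - x^k))\bigr)$.

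It remains to commute the projections with the scalars $\alpha_x,\alpha_y$. Here I would use that the feasible sets transform as $\hat X = \{x : \alpha_x l \le x \le \alpha_x u\} = \alpha_x X$ and $\hat Y = Y$ (the constraint $y_{1:m_1}\ge 0$ is scale-free). Since $X$ and $Y$ are boxes and projection onto a box is coordinatewise clamping, for any $\alpha > 0$ one has $\proj_{\alpha X}(\alpha v) = \alpha\,\proj_X(v)$ and $\proj_Y(\alpha w) = \alpha\,\proj_Y(w)$; applying these with $\alpha = \alpha_x$ and $\alpha = \alpha_y$ gives $\hat x^{k+1} = \alpha_x x^{k+1}$ and $\hat y^{k+1} = \alpha_y y^{k+1}$, closing the induction. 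I expect the only genuine subtlety to be bookkeeping: one must verify that it is the \emph{data-dependent} definition $\eta = 0.9/\|K\|_2$ that forces $\hat\eta = \eta/\gamma$, since an externally fixed $\eta$ would leave a stray factor of $\gamma$ and break the invariance. The projection-scaling identity, though the one truly geometric ingredient, is elementary for boxes.
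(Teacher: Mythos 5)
Your proof is correct and follows essentially the same route as the paper's: induction on $k$, computing $\hat\omega = (\alpha_y/\alpha_x)\omega$ and $\hat\eta = \eta/\gamma$ so that the primal and dual update arguments scale by exactly $\alpha_x$ and $\alpha_y$, then commuting the projections with positive scalars. If anything you are slightly more careful than the paper, which simply \emph{defines} $\hat\eta = \eta/\gamma$ without noting (as you do) that this is forced by the data-dependent rule $\eta = 0.9/\|K\|_2$ together with $\|\hat K\|_2 = \gamma\|K\|_2$, and which leaves the identities $\proj_{\alpha_x X}(\alpha_x v) = \alpha_x \proj_X(v)$ and $\proj_Y(\alpha_y w) = \alpha_y \proj_Y(w)$ implicit.
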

\begin{proof}
We will prove this by induction. By definition the result holds for $k = 0$. Define $\hat{\eta} = \eta / \gamma$, and $\hat{\omega} = \| \hat{c} \| / \| \hat{q} \|_2 = \omega (\alpha_y / \alpha_x)$. Then,
\begin{flalign*}
\hat{x}^{k+1} &= \proj_{\hat{X}}\left( \hat{x}^{k} - \hat{\eta} / \hat{\omega} (\hat{c} - \hat{K}^\T \hat{y}^{k} ) \right) \\
&= \proj_{\hat{X}}\left( \alpha_{x} x^{k} - \alpha_{x} \eta / \omega (c - K^\T y^{k} ) \right) \\
&= \alpha_{x} \proj_{X}\left( x^{k} - \eta / \omega (c - K^\T y^{k} ) \right) \\
&= \alpha_x x^{k+1}.
\end{flalign*}
Similarly,
\begin{flalign*}
\hat{y}^{k+1} &= \proj_{\hat{Y}}\left( \hat{y}^{k} - \hat{\eta} \hat{\omega} (\hat{q} - \hat{K} (2 \hat{x}^{k+1} - \hat{x}^k)) \right) \\
&=  \proj_{\hat{Y}}\left( \alpha_y y^{k} -  \alpha_y  \eta \omega (q - K (2 x^{k+1} - x^k)) \right)\\
&= \alpha_y \proj_{Y}\left( \alpha_y y^{k} - \alpha_y  \eta \omega (q - K (2 x^{k+1} - x^k)) \right)\\
&= \alpha_y y^{k+1}.
\end{flalign*}
\end{proof}

\section{\texttt{MIP Relaxations} dataset}\label{sec:currated-miplib-dataset}

MIPLIB 2017~\cite{gleixner2021miplib} is a collection of mixed integer programming (MIP) problems used primarily for developing and benchmarking MIP solvers. MIPLIB contains both a larger collection set (1056 instances) and a smaller benchmark set (240 instances). We select 383 instances from the collection set that satisfy the following criteria:
\begin{itemize}
\item Not tagged as numerically unstable
\item Not tagged as infeasible
\item Not tagged as having indicator constraints
\item Finite optimal objective (if known)
\item The constraint matrix has between $100,000$ and $10,000,000$ nonzero coefficients.
\end{itemize}
For comparison, the MIPLIB benchmark set excludes instances whose constraint matrix has more than $1,000,000$ nonzero coefficients. The upper limit of $10,000,000$ was chosen for the convenience of running experiments. Our set both excludes small instances that may be in the benchmark set and includes instances deemed too large for the benchmark set. From each MIP instance we derive an LP instance by removing the integrality constraints.

\section{Ablation study}\label{sec:ablation}
To study the impact of PDLP's improvements over baseline PDHG, we performed an ablation study, in which we evaluate the consequences of disabling each enhancement separately and evaluate alternative choices. All experiments in this section are performed on the \texttt{MIP Relaxations} dataset. Each of these experiments is run with a limit of 100,000 KKT passes and 6 hours. If the instance is unsolved, the KKT passes are set to 100,000, and the solve time to 6 hours.

\subsection{Step size choice}\label{sec:step-size-choice}
\begin{figure}
\centering
\includegraphics[width=0.45\linewidth]{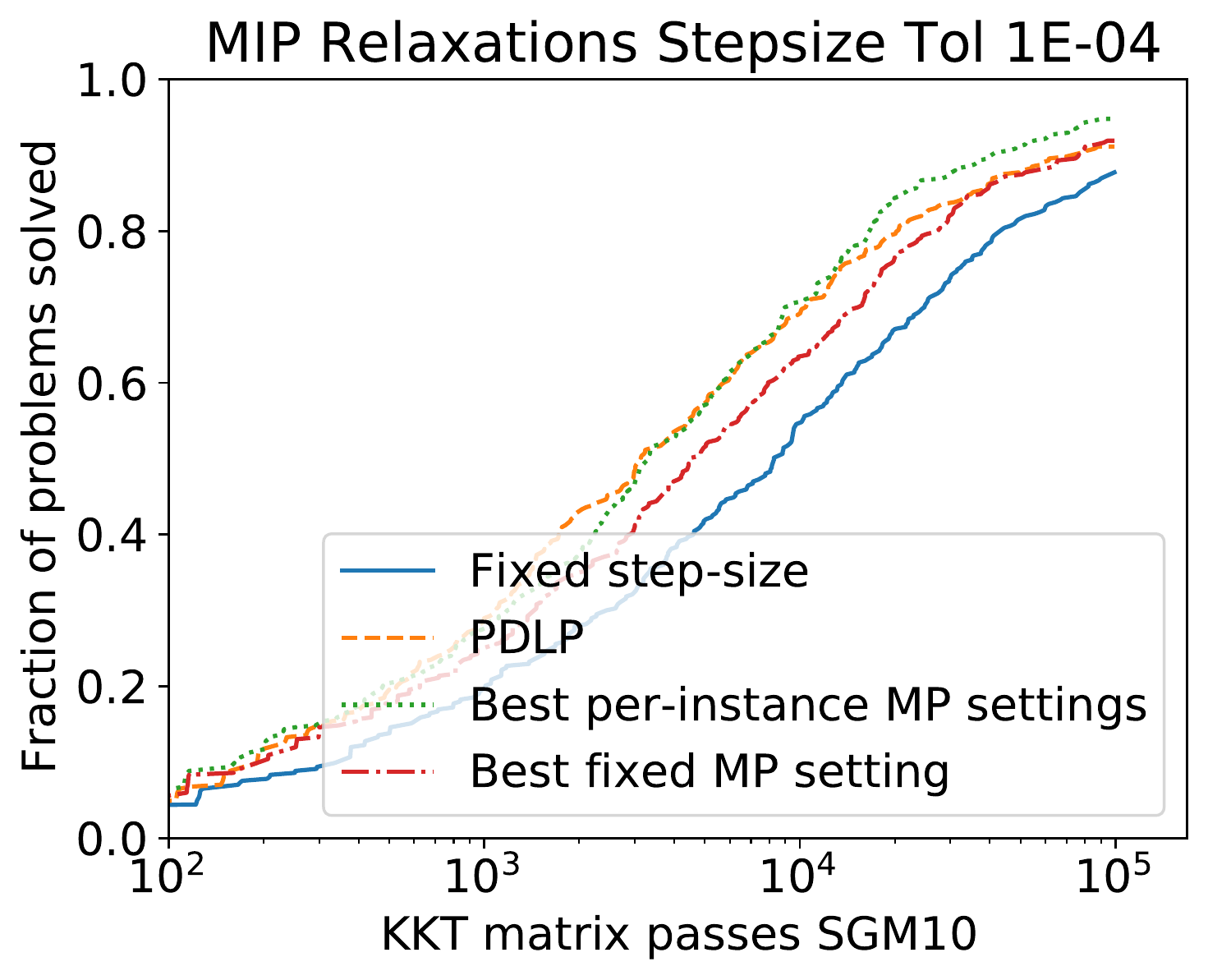}
\includegraphics[width=0.45\linewidth]{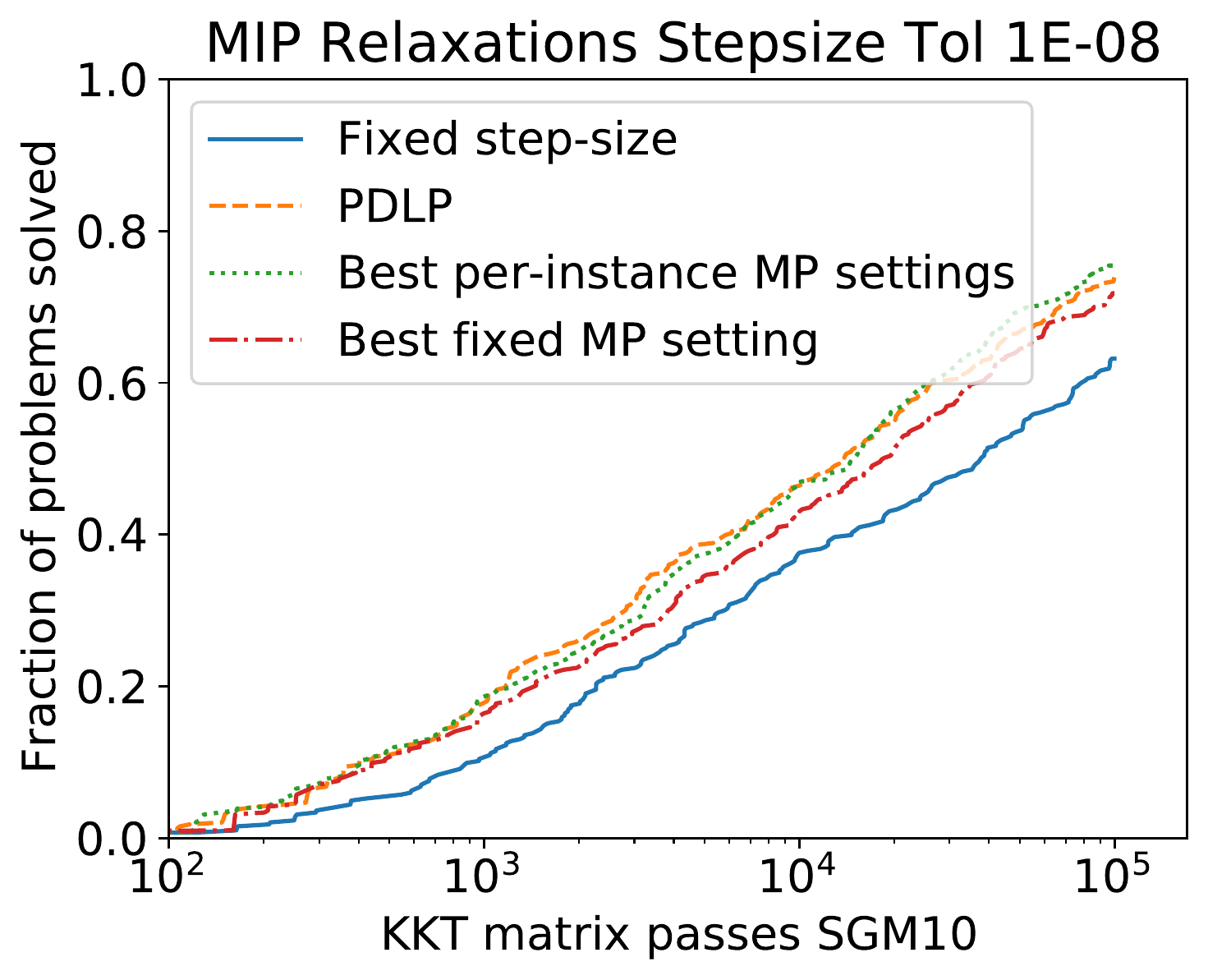}
\caption{Step size ablation experiments on \texttt{MIP Relaxations}}
\label{fig:stepsize}
\end{figure}
\begin{table}
\small
\centering
\caption{\small Performance statistics: MIP Relaxations Stepsize Tol 1E-04}
\label{t:solved-probs-mip_relaxations_stepsize_tol_1E-04}
\begin{tabular}{lccc}
\toprule
                    Experiment &  Solved count &  KKT passes SGM10 &  Solve time secs SGM10 \\
\midrule
               Fixed step-size &           336 &            6207.9 &                   91.3 \\
                          PDLP &           349 &            3058.3 &                   51.1 \\
         Best fixed MP setting &           352 &            3855.6 &                   58.5 \\
 Best per-instance MP settings &           363 &            2869.7 &                   41.4 \\
\bottomrule
\end{tabular}
\end{table}

\begin{table}
\small
\centering
\caption{\small Performance statistics: MIP Relaxations Stepsize Tol 1E-08}
\label{t:solved-probs-mip_relaxations_stepsize_tol_1E-08}
\begin{tabular}{lccc}
\toprule
                    Experiment &  Solved count &  KKT passes SGM10 &  Solve time secs SGM10 \\
\midrule
               Fixed step-size &           242 &           17339.5 &                  469.9 \\
         Best fixed MP setting &           275 &           11660.4 &                  260.1 \\
                          PDLP &           283 &            9773.3 &                  216.0 \\
 Best per-instance MP settings &           289 &            9778.7 &                  193.8 \\
\bottomrule
\end{tabular}
\end{table}

We compare PDLP's adaptive step size rule against three alternatives:
\begin{itemize}
    \item ``Fixed step size'': (baseline PDHG) The step size $\eta$ is fixed to $\eta = 0.9 / \| K \|_2$ where $\| K \|_2$ is estimated via power iteration,
    \item ``Best fixed Malitsky-Pock (MP) setting'': Malitsky and Pock \cite{malitsky2018linesearch}, tuning the hyperparameters via a hyperparameter search, and
    \item ``Best per-instance Malitsky-Pock (MP) setting'': Malitsky and Pock \cite{malitsky2018linesearch}, choosing the best hyperparameters separately for each instance. This is a ``virtual'' solver that combines 42 hyperparameter configurations.
\end{itemize}
The results, in Figure~\ref{fig:stepsize} and Tables~\ref{t:solved-probs-mip_relaxations_stepsize_tol_1E-04} and \ref{t:solved-probs-mip_relaxations_stepsize_tol_1E-08}, show that PDLP is slightly better than tuned Malitsky-Pock, and at high accuracy, almost as good as per-instance tuned Malitsky-pock.

\paragraph{Description of Malitsky and Pock hyperparameters.} Our implementation depends on three hyperparameters: \texttt{breaking\_factor}, \texttt{downscaling\_factor}, and  \texttt{interpolation\_coefficient}. We explain the role of each one by summarizing the linesearch rule. Suppose the algorithm finished iteration $k$ and it does not execute a restart. Thus, the primal weight doesn't not change $\omega_k = \omega_{k+1}$. Mimicking the notation in \cite{malitsky2018linesearch} we define:
$$\theta_k = \frac{\eta_{k-1}}{\eta_k}.$$
Then, the algorithm does the following at iteration $k+1$:
\begin{enumerate}
    \item Update primal iterate $x^{k+1} \leftarrow \proj_X\left(x^k - \frac{\eta_k}{\omega_k}\left(c - K^\top y^k\right)\right).$ 
    \item Pick a candidate for the next step size $\widehat{\eta}_{k+1} \in [\eta_k, \sqrt{1+\theta_k} \eta_k]$. By letting
    $$\widehat{\eta}_{k+1} \leftarrow \eta_k + \texttt{interpolation\_coefficient} \cdot \left(\sqrt{1+\theta_k} - 1\right)\eta_k\quad\text{and} \quad \widehat{\theta}_{k+1}\leftarrow \frac{\eta_k}{\widehat{\eta}_{k+1}}.$$
    \item Compute a candidate for next dual iterate $y_{k+1}$:
    $$ \widehat{y}^{k+1} \leftarrow \proj_Y\left(y^{k} + \omega_{k+1}\widehat{\eta}_{k+1} \left(q - K\left(x^{k+1}+\widehat{\theta}_{k+1}(x^{k+1}-x^k)\right)\right)\right). $$
    \item Check if the linesearch is done;
    \begin{enumerate}
        \item[]  \textbf{If } $\widehat{\eta}_k \| K^\top (\widehat{y}^{k+1} - y^k)\| \leq \texttt{breaking\_factor} \cdot \|\widehat{y}^{k+1} - y^k\|$: 
        $$\eta_{k+1}\leftarrow \widehat{\eta}_{k+1}, \quad \theta_{k+1} \leftarrow \widehat{\theta}_{k+1}, \quad\text{and}\quad y^{k+1}\leftarrow \widehat{y}^{k+1}.$$
        \item[] \textbf{Else}: reduce the step size as follows and then \textbf{go to} Step 3: $$\widehat{\eta}_{k+1} \leftarrow \texttt{downscaling\_factor} \cdot \widehat{\eta}_{k+1},\quad\widehat{\theta}_{k+1}\leftarrow \frac{\eta_k}{\widehat{\eta}_{k+1}}. $$
    \end{enumerate}
\end{enumerate}

In our experiments, we fix \texttt{breaking\_factor} = 1 on guidance from the authors of~\cite{malitsky2018linesearch}. We then perform a grid search on $\texttt{downscaling\_factor} \in \{0.4, 0.5, 0.6, 0.7, 0.8, 0.9\}$ and $\texttt{interpolation\_coefficient} \in \{0.4, 0.5, 0.6, 0.7, 0.8, 0.9, 1.0\}$.

The single best configuration (by count of solved instances) for both $\epsilon = 10^{-4}$ and $\epsilon = 10^{-8}$ is $\texttt{downscaling\_factor}=0.5$ and $\texttt{interpolation\_coefficient}=0.4$.

\subsection{Adaptive restarts}\label{sec:ablation-adaptive-restarts}

\begin{figure}
\centering
\includegraphics[width=0.45\linewidth]{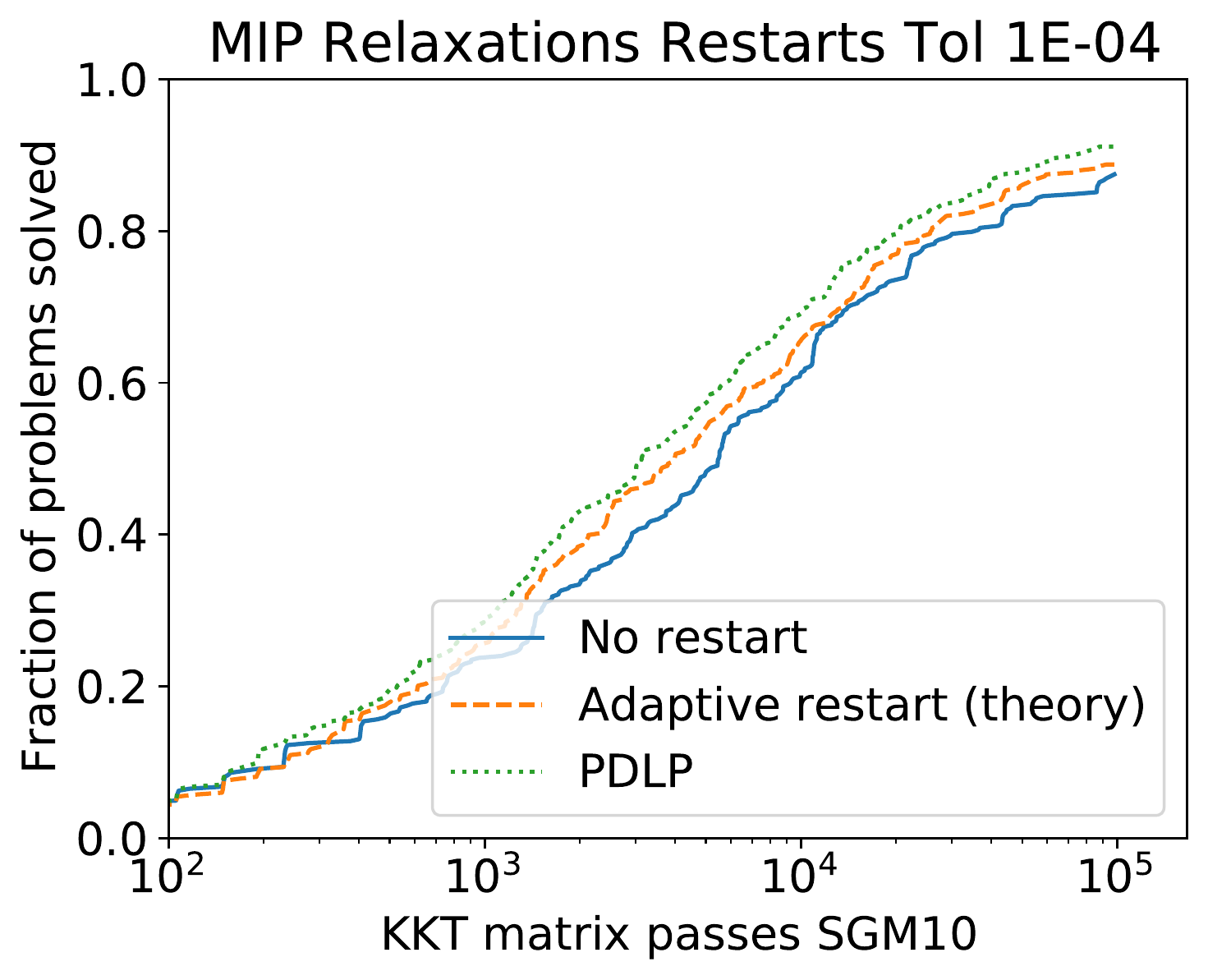}
\includegraphics[width=0.45\linewidth]{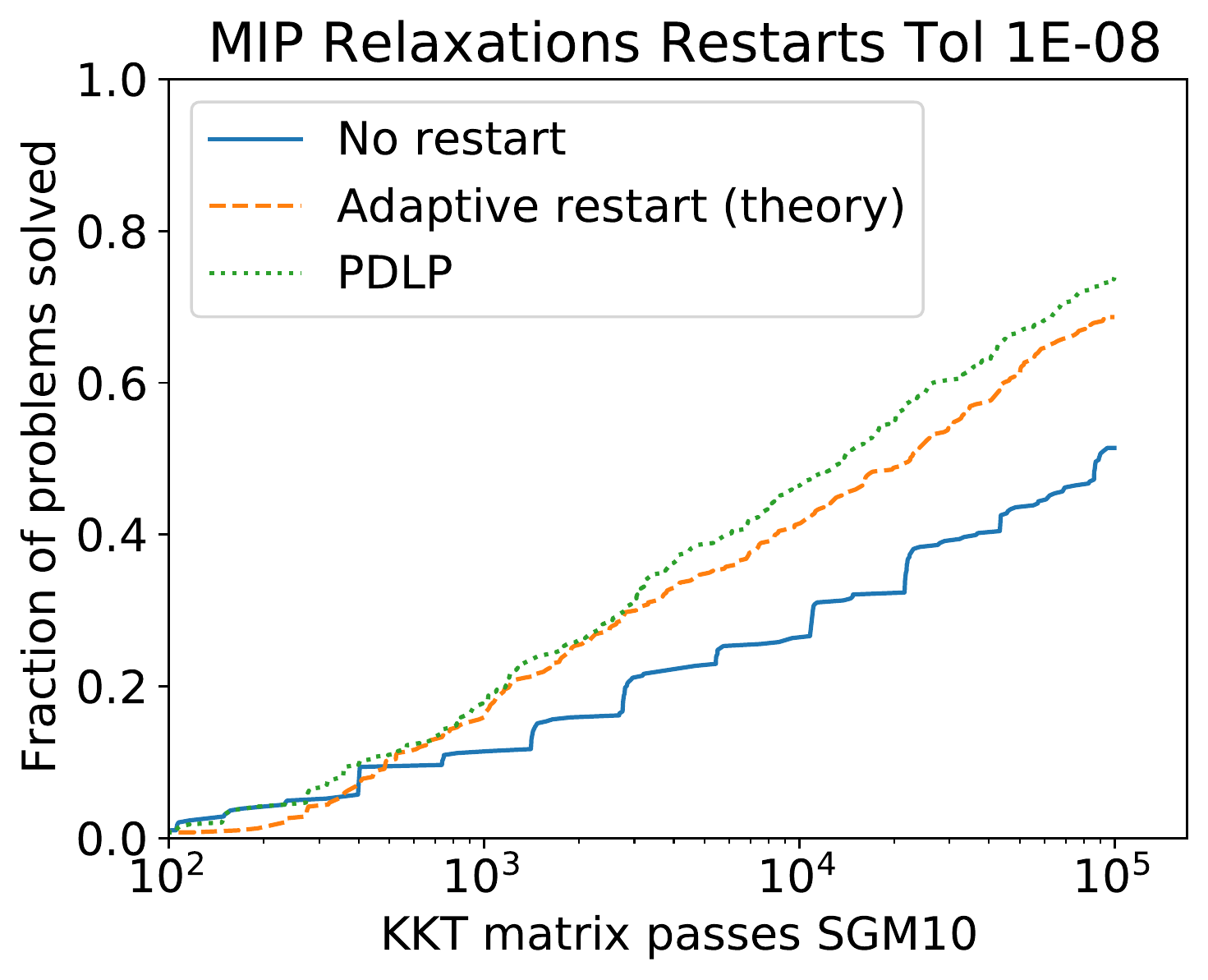}
\caption{Restart ablation experiments on \texttt{MIP Relaxations}}
\label{fig:restarts}
\end{figure}
\begin{table}
\small
\centering
\caption{\small Performance statistics: MIP Relaxations Restarts Tol 1E-04}
\label{t:solved-probs-mip_relaxations_restarts_tol_1E-04}
\begin{tabular}{lccc}
\toprule
                Experiment &  Solved count &  KKT passes SGM10 &  Solve time secs SGM10 \\
\midrule
                No restart &           335 &            4387.9 &                   70.2 \\
 Adaptive restart (theory) &           340 &            3680.5 &                   64.0 \\
                      PDLP &           349 &            3058.3 &                   51.1 \\
\bottomrule
\end{tabular}
\end{table}

\begin{table}
\small
\centering
\caption{\small Performance statistics: MIP Relaxations Restarts Tol 1E-08}
\label{t:solved-probs-mip_relaxations_restarts_tol_1E-08}
\begin{tabular}{lccc}
\toprule
                Experiment &  Solved count &  KKT passes SGM10 &  Solve time secs SGM10 \\
\midrule
                No restart &           197 &           22488.2 &                  960.5 \\
 Adaptive restart (theory) &           263 &           12175.6 &                  308.0 \\
                      PDLP &           283 &            9773.3 &                  215.5 \\
\bottomrule
\end{tabular}
\end{table}

For PDLP, we use
$\beta_{\text{sufficient}} = 0.9$, $\beta_{\text{necessary}} = 0.1$, and $\beta_{\text{artificial}} = 0.5$ as the restart parameters.
For ``Adaptive restart (theory)'' mode we match \cite{applegate2021restarts} and by setting $\beta_{\text{sufficient}} = \beta_{\text{necessary}} = 0.37 = \exp(-1)$. This is equivalent to removing condition (ii) from the restart criteria.
For ``no restart'' mode we disable restarts. For this setting primal weight updates still occur when an artificial restart would have been triggered. In other words, the primal weights are updated on iteration $2, 2^2, 2^3, \dots $.

The performance of adaptive restarts are summarized in Figure~\ref{fig:restarts} and Tables~\ref{t:solved-probs-mip_relaxations_restarts_tol_1E-04} and \ref{t:solved-probs-mip_relaxations_restarts_tol_1E-08}. 
We can see PDLP outperforms ``Adaptive restart (theory)'' mode which in turn beats `no restart'' mode.
This difference is much more pronounced at high accuracy.

\subsection{Primal weight updates}\label{sec:ablation-primal-weight}
\begin{figure}
\centering
\includegraphics[width=0.45\linewidth]{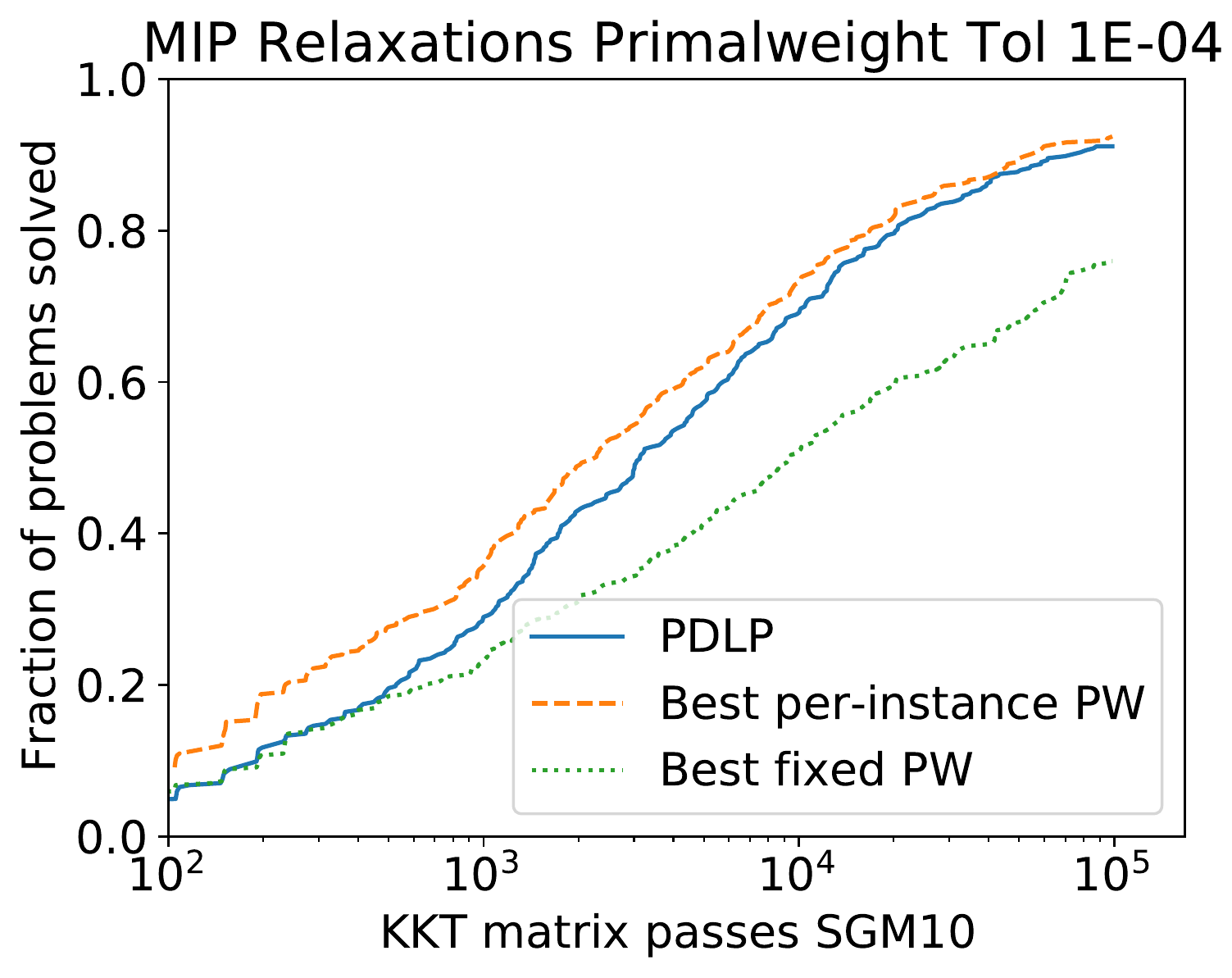}
\includegraphics[width=0.45\linewidth]{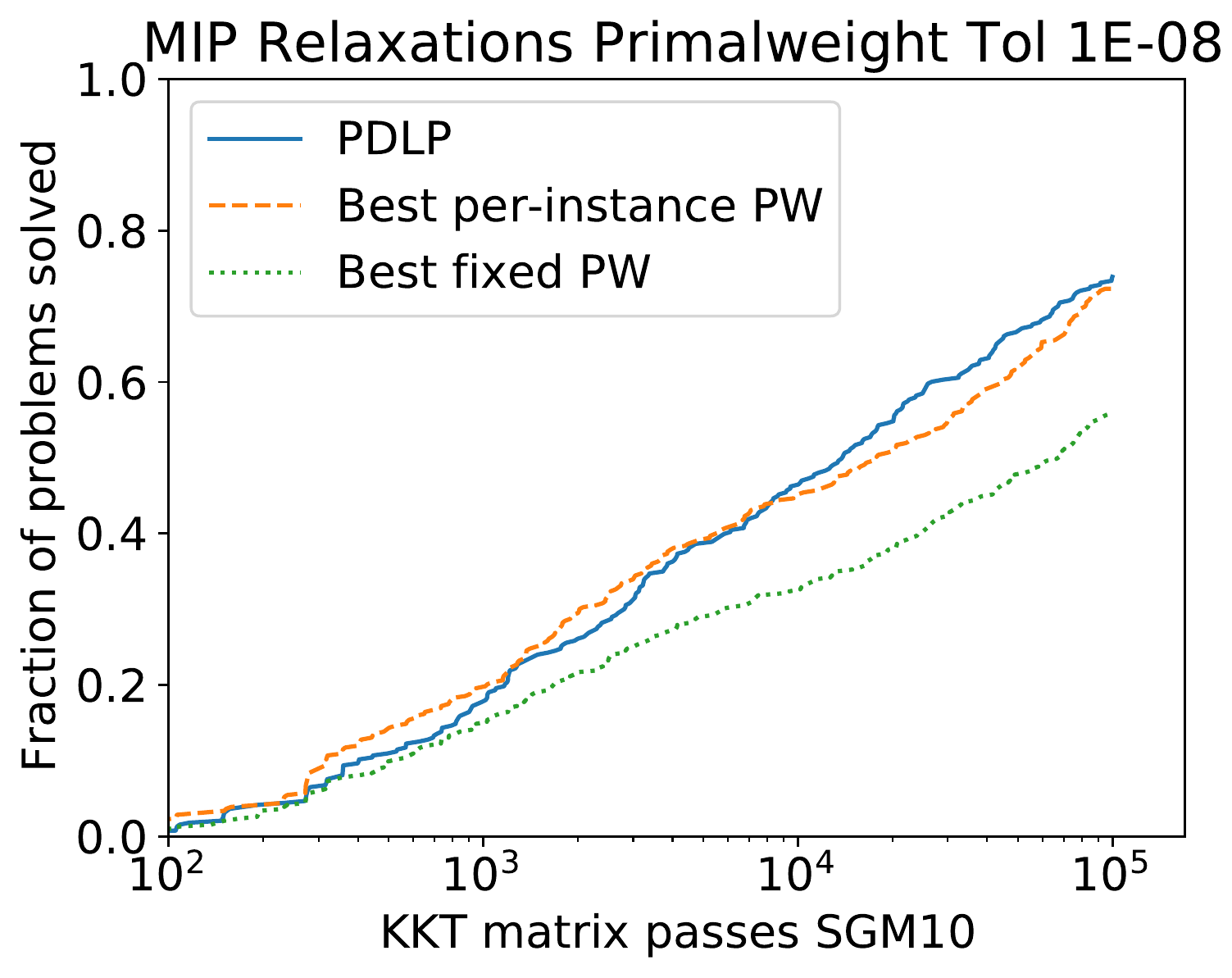}
\caption{Primal weight ablation experiments on \texttt{MIP Relaxations}}
\label{fig:primalweight}
\end{figure}
\begin{table}
\small
\centering
\caption{\small Performance statistics: MIP Relaxations Primalweight Tol 1E-04}
\label{t:solved-probs-mip_relaxations_primalweight_tol_1E-04}
\begin{tabular}{lccc}
\toprule
           Experiment &  Solved count &  KKT passes SGM10 &  Solve time secs SGM10 \\
\midrule
        Best fixed PW &           291 &            6548.0 &                  184.4 \\
                 PDLP &           349 &            3058.3 &                   51.5 \\
 Best per-instance PW &           354 &            2091.3 &                   41.2 \\
\bottomrule
\end{tabular}
\end{table}

\begin{table}
\small
\centering
\caption{\small Performance statistics: MIP Relaxations Primalweight Tol 1E-08}
\label{t:solved-probs-mip_relaxations_primalweight_tol_1E-08}
\begin{tabular}{lccc}
\toprule
           Experiment &  Solved count &  KKT passes SGM10 &  Solve time secs SGM10 \\
\midrule
        Best fixed PW &           214 &           17852.7 &                  707.2 \\
 Best per-instance PW &           277 &            9846.8 &                  246.0 \\
                 PDLP &           283 &            9773.3 &                  216.8 \\
\bottomrule
\end{tabular}
\end{table}

In PDLP, the smoothing parameter is set to $\theta = 0.5$.
As baselines for PDLP's primal weight (PW) updating rule, we compare with using fixed primal weights, setting the primal weight to $\omega = \xi\cdot\InitializePrimalWeight{c, q}$ with the bias $\xi \in \{10^{-5}, 10^{-4}, \ldots, 10^0, \ldots, 10^4, 10^5\}$ chosen by grid search. For these experiments, the smoothing parameter is set to $\theta=0$ to fix the primal weight during the solve.

We compute both the single best value of $\xi$ (by count of solved instances), and the best per-instance value, which defines a ``virtual'' solver. The single best value of $\xi$ is 0.1 at both $\epsilon = 10^{-4}$ and $\epsilon = 10^{-8}$. Qualitatively, the performance of $\xi = 1$, which is a natural default, is very similar to that of $\xi = 0.1$.

From Figure~\ref{fig:primalweight} and Tables~\ref{t:solved-probs-mip_relaxations_primalweight_tol_1E-04} and \ref{t:solved-probs-mip_relaxations_primalweight_tol_1E-08}, we conclude that PDLP is competitive with the best per-instance fixed primal weight at low accuracy, and outperforms it at high accuracy.

\subsection{Presolve}\label{sec:ablation-presolve}
\begin{figure}
\centering
\includegraphics[width=0.45\linewidth]{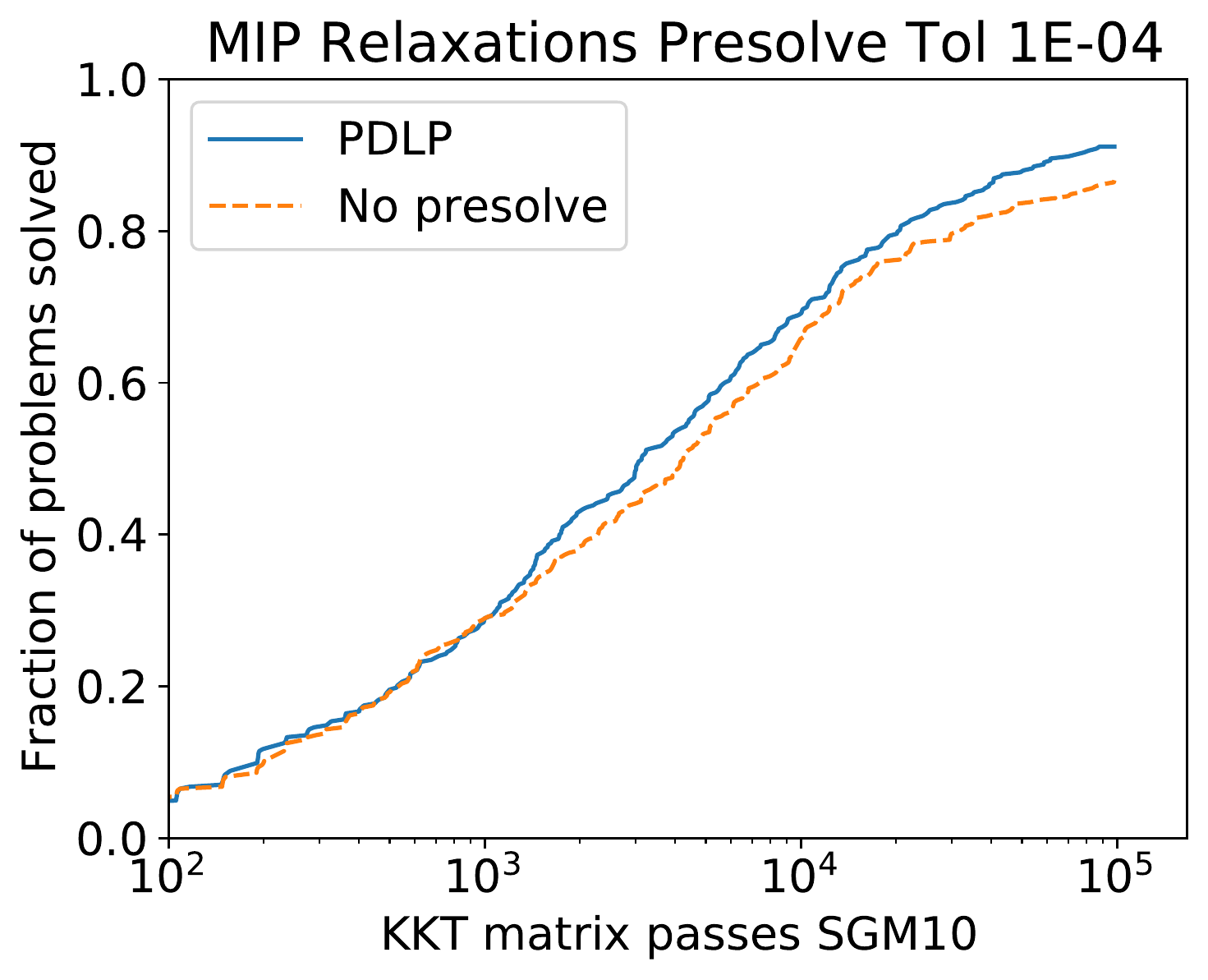}
\includegraphics[width=0.45\linewidth]{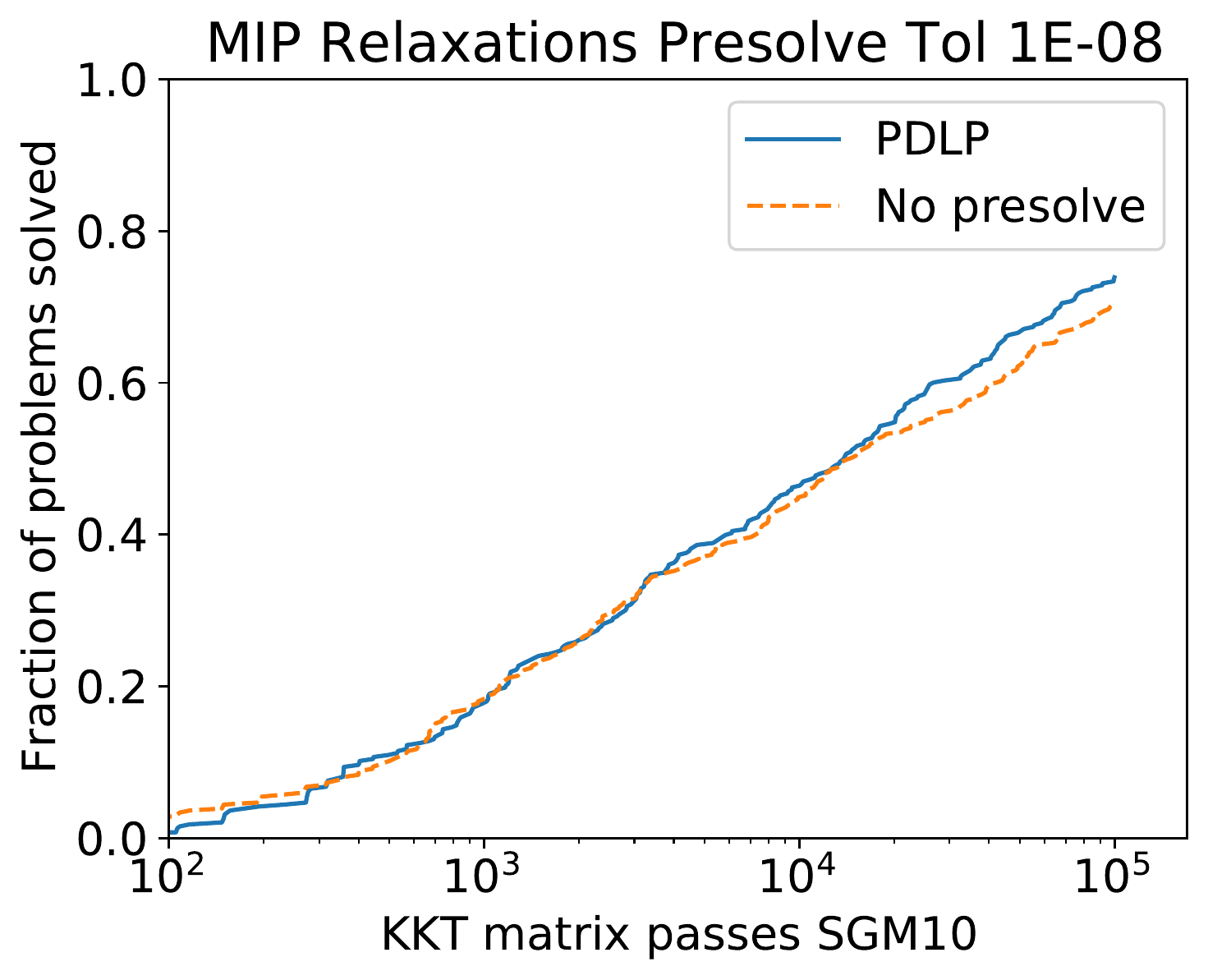}
\caption{Presolve ablation experiments on \texttt{MIP Relaxations}}
\label{fig:presolve}
\end{figure}
\begin{table}
\small
\centering
\caption{\small Performance statistics: MIP Relaxations Presolve Tol 1E-04}
\label{t:solved-probs-mip_relaxations_presolve_tol_1E-04}
\begin{tabular}{lccc}
\toprule
  Experiment &  Solved count &  KKT passes SGM10 &  Solve time secs SGM10 \\
\midrule
 No presolve &           332 &            3615.3 &                   89.9 \\
        PDLP &           349 &            3058.3 &                   51.0 \\
\bottomrule
\end{tabular}
\end{table}

\begin{table}
\small
\centering
\caption{\small Performance statistics: MIP Relaxations Presolve Tol 1E-08}
\label{t:solved-probs-mip_relaxations_presolve_tol_1E-08}
\begin{tabular}{lccc}
\toprule
  Experiment &  Solved count &  KKT passes SGM10 &  Solve time secs SGM10 \\
\midrule
 No presolve &           269 &           10030.6 &                  318.0 \\
        PDLP &           283 &            9773.3 &                  215.8 \\
\bottomrule
\end{tabular}
\end{table}

Figure~\ref{fig:presolve} and Tables~\ref{t:solved-probs-mip_relaxations_presolve_tol_1E-04} and \ref{t:solved-probs-mip_relaxations_presolve_tol_1E-08} measure the impact of presolve. Note that the impact on solve time is greater than the impact on KKT passes, because presolve also makes each KKT pass faster by making the problem smaller.

\subsection{Diagonal preconditioning}\label{sec:app-diagonal}
\begin{figure}
\centering
\includegraphics[width=0.45\linewidth]{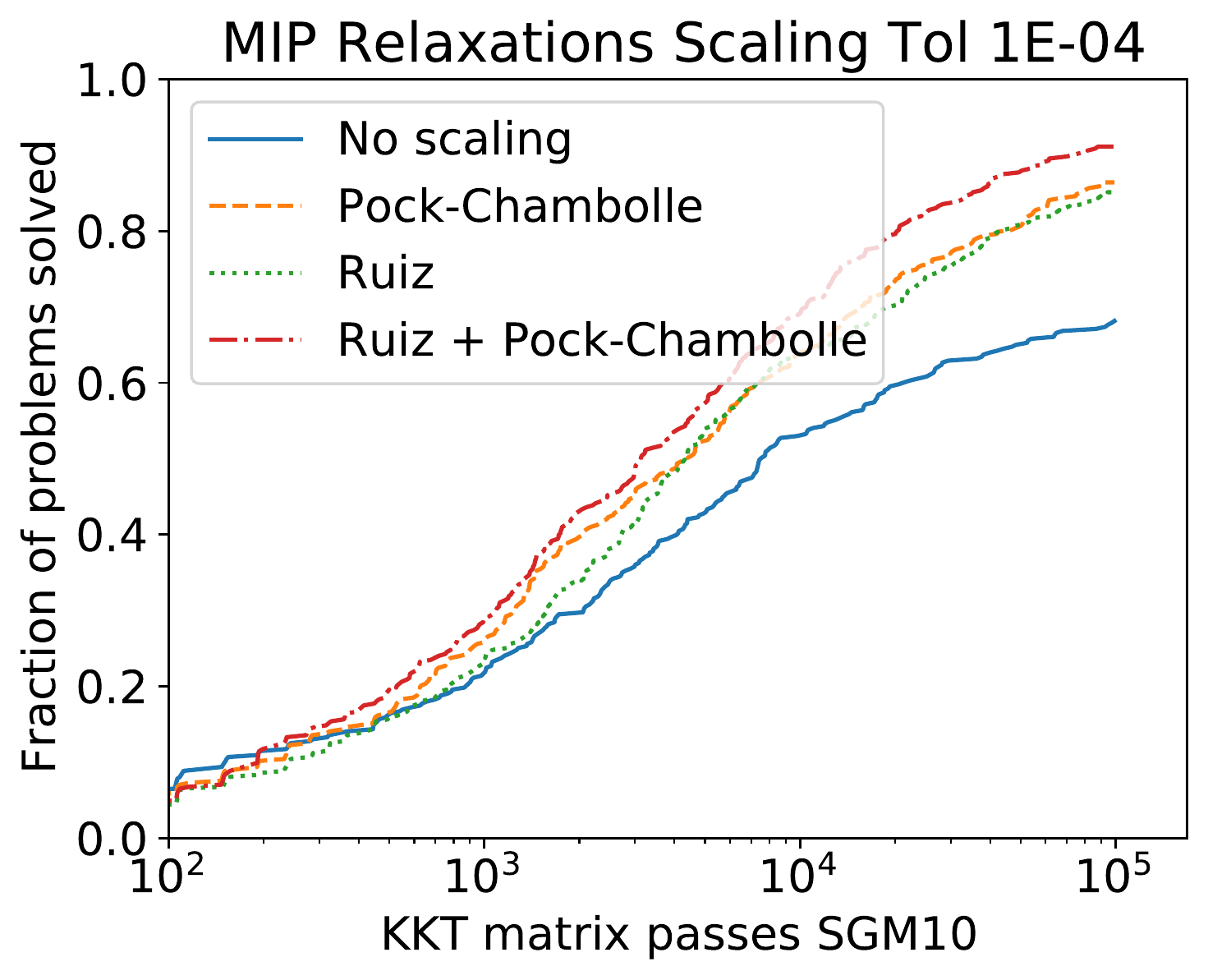}
\includegraphics[width=0.45\linewidth]{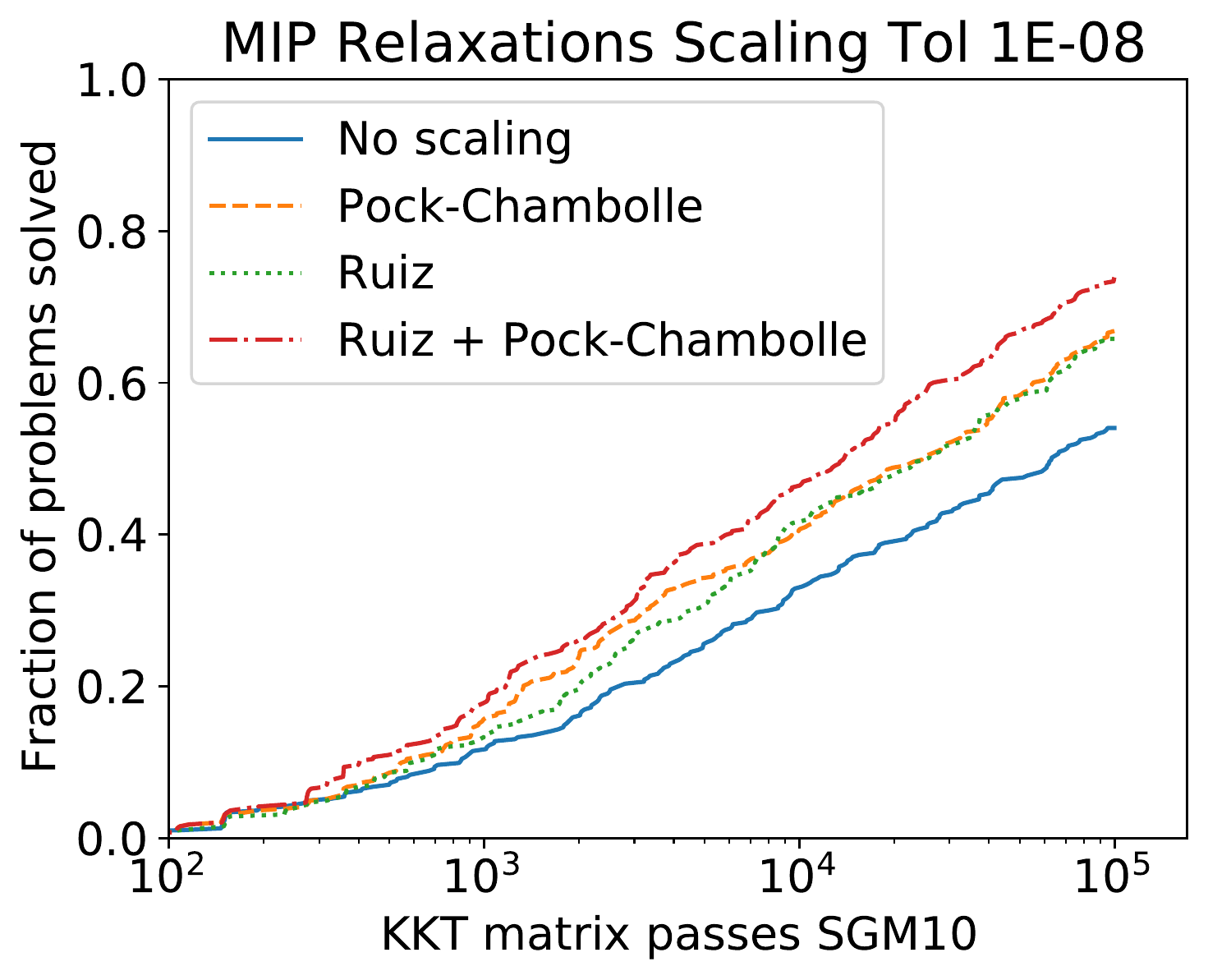}
\caption{Diagonal preconditioning ablation experiments on \texttt{MIP Relaxations}}
\label{fig:scaling}
\end{figure}
\begin{table}
\small
\centering
\caption{\small Performance statistics: MIP Relaxations Scaling Tol 1E-04}
\label{t:solved-probs-mip_relaxations_scaling_tol_1E-04}
\begin{tabular}{lccc}
\toprule
            Experiment &  Solved count &  KKT passes SGM10 &  Solve time secs SGM10 \\
\midrule
            No scaling &           261 &            6700.9 &                  254.1 \\
                  Ruiz &           326 &            4487.7 &                   88.3 \\
        Pock-Chambolle &           331 &            3941.3 &                   78.0 \\
 Ruiz + Pock-Chambolle &           349 &            3058.3 &                   50.9 \\
\bottomrule
\end{tabular}
\end{table}

\begin{table}
\small
\centering
\caption{\small Performance statistics: MIP Relaxations Scaling Tol 1E-08}
\label{t:solved-probs-mip_relaxations_scaling_tol_1E-08}
\begin{tabular}{lccc}
\toprule
            Experiment &  Solved count &  KKT passes SGM10 &  Solve time secs SGM10 \\
\midrule
            No scaling &           207 &           19770.0 &                  787.1 \\
                  Ruiz &           252 &           14028.2 &                  379.9 \\
        Pock-Chambolle &           256 &           12960.0 &                  366.2 \\
 Ruiz + Pock-Chambolle &           283 &            9773.3 &                  218.0 \\
\bottomrule
\end{tabular}
\end{table}

Tables~\ref{t:solved-probs-mip_relaxations_scaling_tol_1E-04} and \ref{t:solved-probs-mip_relaxations_scaling_tol_1E-08} compare the performance of the four diagonal preconditioning techniques as mentioned in Section \ref{sec:diag}. As we can see, the number of solved problems of our proposed preconditioner (Ruiz and Pock-Chambolle) significantly outperform no scaling and the baselines (Pock-Chambolle or Ruiz individually).

Furthermore, Figure~\ref{fig:scaling} shows the number of solved instances as a function of KKT passes for the four different diagonal preconditioners, which further shows a clear separation between PDLP and the baselines.

\section{Additional details on the PageRank LP formulation} \label{app:large}

Based on Nesterov \cite{nesterov2014pagerank}, we formulate the problem of finding a maximal right eigenvector of a stochastic matrix $S$ as a feasible solution of the LP problem:
\begin{eqnarray}\begin{aligned}
\label{eq:pagerank:norm1}
       \mbox{find} \;\; & x\\
\text{subject to:} \;\; & Sx & \leq & & x\\
& \mathbf{1}^\top x & = && 1\\
& x & \geq & & 0 
\end{aligned}
\end{eqnarray}
Nesterov \cite{nesterov2014pagerank} states the constraint $\|x\|_\infty \geq 1$ to enforce $x \neq 0$. We instead use $\mathbf{1}^\top x = 1$ which is equivalent under scaling.

For a random scalable collection of pagerank instances, we used Barabási-Albert~\cite{barabasi1999graph} preferential attachment graphs, using the Julia \texttt{LightGraphs.SimpleGraphs.barabasi\_albert} generator with degree set to 3. We then computed the adjacency matrix and scaled the columns to make the matrix stochastic; call this matrix $S'$. Following the standard PageRank formulation we apply a damping factor to $S'$ and consider $S := \lambda S' + (1-\lambda) J/n$ (where $J = \mathbf{1} \mathbf{1}^\top$ is the all-ones matrix). Intuitively, $S$ encodes a random walk that follows a link in the graph with probability $\lambda$ or jumps to a uniformly random node with probability $1-\lambda$.  

The direct approach to the damping factor results in a completely dense matrix.  Instead we use the fact that $J x = 1$ to rewrite the constraint $Sx \leq x$ in (\ref{eq:pagerank:norm1}) as
\begin{equation}
\lambda (S'x)_i + (1 - \lambda) / n \; \leq \; x_i\;\;\forall i\;\;.
\end{equation}

\section{Additional PDLP improvements results}
\begin{table}
\small
\centering
\caption{\small Performance statistics: MIP Relaxations Improvements Tol 1E-04}
\label{t:solved-probs-mip_relaxations_improvements_tol_1E-04}
\begin{tabular}{lccc}
\toprule
        Experiment &  Solved count &  KKT passes SGM10 &  Solve time secs SGM10 \\
\midrule
              PDHG &           113 &           38958.0 &                 1088.3 \\
         +restarts &           140 &           29739.6 &                  770.4 \\
          +scaling &           221 &           14801.5 &                  313.6 \\
    +primal weight &           315 &            7228.1 &                  110.8 \\
        +step size &           332 &            3615.3 &                   67.6 \\
+presolve (= PDLP) &           349 &            3058.3 &                   42.1 \\
\bottomrule
\end{tabular}
\end{table}

\begin{table}
\small
\centering
\caption{\small Performance statistics: MIP Relaxations Improvements Tol 1E-08}
\label{t:solved-probs-mip_relaxations_improvements_tol_1E-08}
\begin{tabular}{lccc}
\toprule
        Experiment &  Solved count &  KKT passes SGM10 &  Solve time secs SGM10 \\
\midrule
              PDHG &            48 &           68588.8 &                 2232.6 \\
         +restarts &           101 &           47301.1 &                 1284.0 \\
          +scaling &           162 &           25985.7 &                  595.7 \\
    +primal weight &           223 &           18273.4 &                  331.3 \\
        +step size &           269 &           10091.0 &                  181.8 \\
+presolve (= PDLP) &           283 &            9773.3 &                  131.5 \\
\bottomrule
\end{tabular}
\end{table}

\begin{table}
\small
\centering
\caption{\small Performance statistics: LP Benchmark Improvements Tol 1E-04}
\label{t:solved-probs-lp_benchmark_improvements_tol_1E-04}
\begin{tabular}{lccc}
\toprule
        Experiment &  Solved count &  KKT passes SGM10 &  Solve time secs SGM10 \\
\midrule
              PDHG &            10 &           64120.0 &                 2148.5 \\
         +restarts &            10 &           58285.8 &                 2033.9 \\
          +scaling &            17 &           44984.7 &                 1600.3 \\
    +primal weight &            37 &           22232.1 &                  880.1 \\
        +step size &            36 &           13003.7 &                  542.5 \\
+presolve (= PDLP) &            36 &           14721.1 &                  630.7 \\
\bottomrule
\end{tabular}
\end{table}

\begin{table}
\small
\centering
\caption{\small Performance statistics: LP Benchmark Improvements Tol 1E-08}
\label{t:solved-probs-lp_benchmark_improvements_tol_1E-08}
\begin{tabular}{lccc}
\toprule
        Experiment &  Solved count &  KKT passes SGM10 &  Solve time secs SGM10 \\
\midrule
              PDHG &             4 &           87478.5 &                 2784.6 \\
         +restarts &             7 &           69299.7 &                 2210.4 \\
          +scaling &            10 &           58808.8 &                 1929.2 \\
    +primal weight &            14 &           52872.7 &                 1644.8 \\
        +step size &            23 &           38630.3 &                 1336.5 \\
+presolve (= PDLP) &            23 &           35106.0 &                 1281.7 \\
\bottomrule
\end{tabular}
\end{table}

\begin{table}
\small
\centering
\caption{\small Performance statistics: Netlib Improvements Tol 1E-04}
\label{t:solved-probs-netlib_improvements_tol_1E-04}
\begin{tabular}{lccc}
\toprule
        Experiment &  Solved count &  KKT passes SGM10 &  Solve time secs SGM10 \\
\midrule
              PDHG &            14 &           84783.8 &                 1879.2 \\
         +restarts &            15 &           83879.8 &                 1816.4 \\
          +scaling &            43 &           55967.3 &                  485.9 \\
    +primal weight &            94 &           14227.5 &                   30.3 \\
        +step size &            98 &            9443.3 &                   20.4 \\
+presolve (= PDLP) &           103 &            5405.0 &                   11.8 \\
\bottomrule
\end{tabular}
\end{table}

\begin{table}
\small
\centering
\caption{\small Performance statistics: Netlib Improvements Tol 1E-08}
\label{t:solved-probs-netlib_improvements_tol_1E-08}
\begin{tabular}{lccc}
\toprule
        Experiment &  Solved count &  KKT passes SGM10 &  Solve time secs SGM10 \\
\midrule
              PDHG &             4 &           97135.7 &                 2962.2 \\
         +restarts &             8 &           90532.9 &                 2432.7 \\
          +scaling &            22 &           71722.9 &                 1217.9 \\
    +primal weight &            67 &           36843.2 &                  167.1 \\
        +step size &            85 &           23264.1 &                   61.0 \\
+presolve (= PDLP) &            88 &           13419.9 &                   41.2 \\
\bottomrule
\end{tabular}
\end{table}

Tables~\ref{t:solved-probs-mip_relaxations_improvements_tol_1E-04} and \ref{t:solved-probs-mip_relaxations_improvements_tol_1E-08} give a tabular version of the impact of PDLP's improvements on the \texttt{MIP Relaxations} dataset (corresponding to Figure~\ref{fig:miplib-improvements}).  Tables~\ref{t:solved-probs-lp_benchmark_improvements_tol_1E-04} and \ref{t:solved-probs-lp_benchmark_improvements_tol_1E-08} give a tabular version of the impact of PDLP's improvements on the \texttt{LP benchmark} dataset (corresponding to Figure~\ref{fig:lpbenchmark-improvements}).  Tables~\ref{t:solved-probs-netlib_improvements_tol_1E-04} and \ref{t:solved-probs-netlib_improvements_tol_1E-08} give a tabular version of the impact of PDLP's improvements on the \texttt{Netlib} dataset (corresponding to Figure~\ref{fig:netlib-improvements}).

\section{Additional baseline comparison results}
\begin{table}
\small
\centering
\caption{\small Performance statistics: MIP Relaxations Baselines Tol 1E-04}
\label{t:solved-probs-mip_relaxations_baselines_tol_1E-04}
\begin{tabular}{lccc}
\toprule
        Experiment &  Solved count &  KKT passes SGM10 &  Solve time secs SGM10 \\
\midrule
              PDHG &           159 &           45720.5 &                  922.9 \\
 SCS (matrix-free) &           287 &           37027.2 &                  257.0 \\
               SCS &           317 &                 - &                  149.7 \\
Enh. Extragradient &           351 &            6028.7 &                   75.2 \\
              PDLP &           371 &            3236.6 &                   38.4 \\
\bottomrule
\end{tabular}
\end{table}

\begin{table}
\small
\centering
\caption{\small Performance statistics: MIP Relaxations Baselines Tol 1E-08}
\label{t:solved-probs-mip_relaxations_baselines_tol_1E-08}
\begin{tabular}{lccc}
\toprule
        Experiment &  Solved count &  KKT passes SGM10 &  Solve time secs SGM10 \\
\midrule
              PDHG &            80 &           79085.1 &                 2026.7 \\
 SCS (matrix-free) &           124 &           40486.1 &                 1006.9 \\
               SCS &           156 &                 - &                  675.3 \\
Enh. Extragradient &           302 &           21216.9 &                  207.4 \\
              PDLP &           334 &           11381.1 &                  106.4 \\
\bottomrule
\end{tabular}
\end{table}

\begin{table}
\small
\centering
\caption{\small Performance statistics: LP Benchmark Baselines Tol 1E-04}
\label{t:solved-probs-lp_benchmark_baselines_tol_1E-04}
\begin{tabular}{lccc}
\toprule
         Experiment &  Solved count &  KKT passes SGM10 &  Solve time secs SGM10 \\
\midrule
               PDHG &            12 &           67792.0 &                 2009.5 \\
  SCS (matrix-free) &            25 &           51040.9 &                 1118.7 \\
                SCS &            26 &                 - &                 1155.6 \\
 Enh. Extragradient &            27 &           25808.3 &                  944.2 \\
               PDLP &            32 &           16679.4 &                  613.8 \\
\bottomrule
\end{tabular}
\end{table}

\begin{table}
\small
\centering
\caption{\small Performance statistics: LP Benchmark Baselines Tol 1E-08}
\label{t:solved-probs-lp_benchmark_baselines_tol_1E-08}
\begin{tabular}{lccc}
\toprule
        Experiment &  Solved count &  KKT passes SGM10 &  Solve time secs SGM10 \\
\midrule
              PDHG &             6 &           92556.5 &                 2693.1 \\
 SCS (matrix-free) &             7 &           63771.2 &                 2155.6 \\
               SCS &             9 &                 - &                 2017.1 \\
Enh. Extragradient &            13 &           54795.9 &                 1693.4 \\
              PDLP &            22 &           37937.0 &                 1213.4 \\
\bottomrule
\end{tabular}
\end{table}

\begin{table}
\small
\centering
\caption{\small Performance statistics: Netlib Baselines Tol 1E-04}
\label{t:solved-probs-netlib_baselines_tol_1E-04}
\begin{tabular}{lccc}
\toprule
        Experiment &  Solved count &  KKT passes SGM10 &  Solve time secs SGM10 \\
\midrule
              PDHG &            63 &          360059.0 &                  558.2 \\
Enh. Extragradient &           110 &           15722.3 &                   13.2 \\
 SCS (matrix-free) &           110 &           26134.0 &                   15.4 \\
              PDLP &           113 &            6708.7 &                    6.9 \\
               SCS &           113 &                 - &                   11.5 \\
\bottomrule
\end{tabular}
\end{table}

\begin{table}
\small
\centering
\caption{\small Performance statistics: Netlib Baselines Tol 1E-08}
\label{t:solved-probs-netlib_baselines_tol_1E-08}
\begin{tabular}{lccc}
\toprule
        Experiment &  Solved count &  KKT passes SGM10 &  Solve time secs SGM10 \\
\midrule
              PDHG &            44 &          391013.9 &                 1376.2 \\
 SCS (matrix-free) &            46 &           47943.8 &                  559.1 \\
               SCS &            52 &                 - &                  345.0 \\
Enh. Extragradient &           105 &           42993.3 &                   30.6 \\
              PDLP &           108 &           18866.1 &                   18.2 \\
\bottomrule
\end{tabular}
\end{table}

Tables~\ref{t:solved-probs-mip_relaxations_baselines_tol_1E-04} and \ref{t:solved-probs-mip_relaxations_baselines_tol_1E-08} give a tabular version of the comparison of PDLP with other first-order baselines on the \texttt{MIP Relaxations} dataset,
Tables~\ref{t:solved-probs-lp_benchmark_baselines_tol_1E-04} and \ref{t:solved-probs-lp_benchmark_baselines_tol_1E-08} give a tabular version of the comparison of PDLP with other first-order baselines on the \texttt{LP benchmark} dataset, and
Tables~\ref{t:solved-probs-netlib_baselines_tol_1E-04} and \ref{t:solved-probs-netlib_baselines_tol_1E-08} give a tabular version of the comparison of PDLP with other first-order baselines on the \texttt{Netlib} dataset (corresponding to Figure~\ref{fig:pdlp-vs-baseline}). Each of these experiments is run with a time limit of 1 hour. If the instance is unsolved, the KKT passes are set to 100,000, and the solve time to 1 hour.

\section{Instructions for reproducing experiments}

This section documents the precise commands and command-line arguments for each experiment in the paper. These instructions are supplemental to the READMEs in the \texttt{FirstOrderLp} code directory (\url{https://github.com/google-research/FirstOrderLp.jl}). We assume that readers have already followed the instructions in the READMEs to set up and ``instantiate'' the Julia environment, and collect or generate all the datasets. Examples assume that the current working directory is \texttt{FirstOrderLp}.

The full suite of experiments takes approximately $12,500$ CPU-hours to run, and hence requires use of a cluster or cloud computing environment. Given the idiosyncrasies of these environments, we do not provide additional utilities for distributing the experiments. See Section~\ref{sec:setup} for details on the computing platforms we used.

The following base invocations show how to run the two main scripts without any custom arguments.

\begin{lstlisting}[caption={\texttt{solve\_qp.jl} base invocation},label=code:base]
  julia --project=scripts scripts/solve_qp.jl
        --instance_path=$INSTANCE
        --output_dir=$OUTPUT_DIR
\end{lstlisting}

\begin{lstlisting}[caption={\texttt{solve\_lp\_external.jl} base invocation},label=code:base_external]
  julia --project=scripts scripts/solve_lp_external.jl
        --instance_path=$INSTANCE
        --output_dir=$OUTPUT_DIR
\end{lstlisting}

\texttt{solve\_qp.jl} runs methods implemented in the \texttt{FirstOrderLp} module. \texttt{solve\_lp\_external.jl} runs external solvers (specifically, SCS). 

\subsection{Benchmark collection}

The commands used to collect the \texttt{MIP Relaxations}, \texttt{LP Benchmark}, and \texttt{Netlib} benchmarks are described in the \texttt{benchmarking} subdirectory of the \texttt{FirstOrderLp} code directory. \texttt{README.md} provides more detailed instructions, and \texttt{collect\_mip\_relaxations.sh}, \texttt{collect\_lp\_benchmark.sh}, and \texttt{collect\_netlib\_benchmark.sh} give illustrative scripts for collecting the benchmarks.

\subsection{Tolerances}\label{sec:codetolerance}

In experiments we often solve at termination tolerances $\epsilon = 10^{-4}$ and $\epsilon = 10^{-8}$. The following command-line arguments to \texttt{solve\_qp.jl} are used to set these tolerances.

\begin{lstlisting}[caption={\texttt{solve\_qp.jl} arguments for $\epsilon = 10^{-4}$}]
--relative_optimality_tol 1e-4 --absolute_optimality_tol 1e-4
\end{lstlisting}

\begin{lstlisting}[caption={\texttt{solve\_qp.jl} arguments for $\epsilon = 10^{-8}$}]
--relative_optimality_tol 1e-8 --absolute_optimality_tol 1e-8
\end{lstlisting}

\subsection{Improvements experiment}

This section documents the command-line settings corresponding to the experiments in Section~\ref{sec:improvement-measured-impact} that measure the impact of PDLP's improvements over baseline PDHG.

The following common settings apply for each run:

\begin{lstlisting}[caption={Common settings for each run},label=code:improvementscommon]
--kkt_matrix_pass_limit=100000 --restart_to_current_metric=gap_over_distance --verbosity=0 --method=pdhg
\end{lstlisting}

For each solve, use the base invocation for \texttt{solve\_qp.jl} (Listing~\ref{code:base}), a \textcolor{blue}{tolerance setting} (Section~\ref{sec:codetolerance}), \textcolor{OliveGreen}{common settings} (Listing~\ref{code:improvementscommon}), and \textcolor{red}{one set of parameters} below. See the documentation in READMEs and source code for how to set \texttt{\$OUTPUT\_DIR} and process the results.

For example, the following command solves the ``+ scaling'' setting with $\epsilon = 10^{-4}$:
\begin{Verbatim}[commandchars=\\\{\}]
  julia --project=scripts scripts/solve_qp.jl
        --instance_path=$INSTANCE
        --output_dir=$OUTPUT_DIR
\textcolor{blue}{        --relative_optimality_tol=1e-4}
\textcolor{blue}{        --absolute_optimality_tol=1e-4}
\textcolor{OliveGreen}{        --kkt_matrix_pass_limit=100000}
\textcolor{OliveGreen}{        --restart_to_current_metric=gap_over_distance}
\textcolor{OliveGreen}{        --verbosity=0 --method=pdhg}
\textcolor{red}{        --step_size_policy=constant}
\textcolor{red}{        --primal_weight_update_smoothing=0.0}
\textcolor{red}{        --scale_invariant_initial_primal_weight=false}
\end{Verbatim}

Parameter settings:

\begin{enumerate}
    \item ``PDHG'': (on original un-presolved dataset)
    \begin{lstlisting}
--step_size_policy=constant --l_inf_ruiz_iterations=0 --pock_chambolle_rescaling=false --l2_norm_rescaling=false --restart_scheme=no_restart --primal_weight_update_smoothing=0.0 --scale_invariant_initial_primal_weight=false
    \end{lstlisting}
    \item ``+ restarts'':
    \begin{lstlisting}
--step_size_policy=constant --l_inf_ruiz_iterations=0 --pock_chambolle_rescaling=false --l2_norm_rescaling=false --primal_weight_update_smoothing=0.0 --scale_invariant_initial_primal_weight=false
    \end{lstlisting}
    \item ``+ scaling'':
    \begin{lstlisting}
--step_size_policy=constant --primal_weight_update_smoothing=0.0 --scale_invariant_initial_primal_weight=false
    \end{lstlisting}
    \item ``+primal weight'':
    \begin{lstlisting}
--step_size_policy=constant
    \end{lstlisting}
    \item ``+step size'': No additional parameters
    \item ``+presolve (= PDLP)'': Switch to presolved dataset.
\end{enumerate}

\subsection{Comparison with other first-order baselines}

This section documents the command-line settings corresponding to the experiments in Section~\ref{sec:baselines} that compare PDLP with SCS and enhanced Extragradient.

\subsubsection{SCS (\texttt{solve\_lp\_external.jl})}

SCS is invoked via \texttt{solve\_lp\_external.jl}. The following common settings apply for all SCS runs:

\begin{lstlisting}[caption={Common settings for SCS runs}]
--scs-normalize=true --iteration_limit=1000000000
\end{lstlisting}

Because SCS does not support time limits, we use the \texttt{timeout} command to stop SCS after one hour. For example:

\begin{lstlisting}
timeout 1h julia --project=scripts scripts/solve_lp_external.jl --solver=scs-direct ...
\end{lstlisting}

The following arguments\footnote{In preliminary experiments on the MIP relaxations dataset, SCS performed better at $10^{-4}$ with this custom setting of acceleration lookback, which disables Anderson Acceleration.} are used to set $\epsilon = 10^{-4}$.

\begin{lstlisting}[caption={SCS arguments for $\epsilon = 10^{-4}$}]
--tolerance=1e-4 --scs-acceleration_lookback=0
\end{lstlisting}

The following arguments are used to set $\epsilon = 10^{-8}$.

\begin{lstlisting}[caption={SCS arguments for $\epsilon = 10^{-8}$}]
--tolerance=1e-8
\end{lstlisting}

The following arguments\footnote{In preliminary experiments on the MIP relaxations dataset, SCS (matrix-free) performed better with \texttt{cg\_rate=1.01}, which controls the rate at which the conjugate gradient convergence tolerance decreases as a function of the iteration number.} select SCS in matrix-free mode:

\begin{lstlisting}[caption={Configuration for SCS (matrix-free)}]
--solver=scs-indirect --scs-cg_rate=1.01
\end{lstlisting}

The following arguments select SCS in its default mode that uses a cached $LDL$ factorization to solve the linear system that arises at each iteration:

\begin{lstlisting}[caption={Configuration for SCS (default)}]
--solver=scs-direct
\end{lstlisting}

\subsubsection{PDLP and Extragradient (\texttt{solve\_qp.jl})}

PDLP and enhanced Extragradient are invoked via \texttt{solve\_qp.jl}.

The following common settings apply to both PDLP and Extragradient.

\begin{lstlisting}[caption={Common settings for PDLP and Extragradient}]
--time_sec_limit=3600 --restart_to_current_metric=gap_over_distance --verbosity=0
\end{lstlisting}

The following two settings select either the PDLP or enhanced Extragradient methods.

\begin{lstlisting}[caption={Configuration for PDLP}]
--method=pdhg
\end{lstlisting}

\begin{lstlisting}[caption={Configuration for enhanced Extragradient}]
--method=mirror-prox
\end{lstlisting}

\subsection{PDLP versus simplex and barrier}

This section lists the commands corresponding to the experiments in Section~\ref{sec:medium-gurobi} that compare PDLP with Gurobi's simplex and barrier algorithms.

\begin{lstlisting}[caption={Command for Gurobi Barrier},label=list:grbbarrier]
gurobi_cl TimeLimit=3600 Method=2 Crossover=0 Threads=1 $INSTANCE
\end{lstlisting}

\begin{lstlisting}[caption={Command for Gurobi Primal Simplex},label=list:grbprimal]
gurobi_cl TimeLimit=3600 Method=0 Threads=1 $INSTANCE
\end{lstlisting}

\begin{lstlisting}[caption={Command for Gurobi Dual Simplex},label=list:grbdual]
gurobi_cl TimeLimit=3600 Method=1 Threads=1 $INSTANCE
\end{lstlisting}

\begin{lstlisting}[caption={Command for PDLP},label=list:pdlp1h]
julia --project=scripts scripts/solve_qp.jl
      --instance_path=$INSTANCE
      --output_dir=$OUTPUT_DIR
      --relative_optimality_tol=1e-8
      --absolute_optimality_tol=1e-8
      --time_sec_limit=3600
      --restart_to_current_metric=gap_over_distance
      --verbosity=0
      --method=pdhg
\end{lstlisting}

\subsection{Large-scale application: PageRank}
This section describes the commands corresponding to the experiments in Section~\ref{sec:pagerank} that compares PDLP, SCS, and Gurobi's methods on PageRank instances. 

The commands for Gurobi methods are the same as in Listings~\ref{list:grbbarrier},~\ref{list:grbprimal}, and~\ref{list:grbdual}. The command for PDLP is the same as in Listing~\ref{list:pdlp1h}. The command for SCS follows:

\begin{lstlisting}[caption={Command for SCS}]
timeout 1h julia --project=scripts scripts/solve_lp_external.jl
      --instance_path=$INSTANCE
      --output_dir=$OUTPUT_DIR
      --scs-normalize=true
      --iteration_limit=1000000000
      --tolerance=1e-8
      --solver=scs-indirect
      --scs-cg_rate=1.01
\end{lstlisting}

\subsection{Ablation study}

In the ablation study, PDLP is invoked as:
\begin{lstlisting}[caption={PDLP configuration for the ablation study},label=list:pdlp_ablation]
  julia --project=scripts scripts/solve_qp.jl
        --instance_path=$INSTANCE
        --output_dir=$OUTPUT_DIR
        --relative_optimality_tol $TOLERANCE
        --absolute_optimality_tol $TOLERANCE
        --method=pdhg
        --restart_to_current_metric=gap_over_distance
        --kkt_matrix_pass_limit=100000
        --verbosity=0
\end{lstlisting}

on the \texttt{MIP Relaxations} dataset (to which presolve has been applied).

\subsection{Step size choice}
This section describes the commands corresponding to the ablation experiments in Section~\ref{sec:step-size-choice} on the step size choice.

The fixed step-size rule is invoked by appending the following argument to the command in Listing~\ref{list:pdlp_ablation}:

\begin{lstlisting}
--step_size_policy=constant
\end{lstlisting}

The Malitsky and Pock step size rule is invoked by appending the following arguments to the command in Listing~\ref{list:pdlp_ablation}:

\begin{lstlisting}
--step_size_policy=malitsky-pock
--malitsky_pock_breaking_factor=1.0
--malitsky_pock_downscaling_factor=$DOWNSCALING_FACTOR
--malitsky_pock_interpolation_coefficient=$INTERPOLATION_COEFFICIENT
\end{lstlisting}

\subsubsection{Adaptive restarts}
This section describes the commands corresponding to the ablation experiments in Section~\ref{sec:ablation-adaptive-restarts} on restarts.

The ``No restart'' setting is invoked by appending the following argument to the command in Listing~\ref{list:pdlp_ablation}:

\begin{lstlisting}
--restart_scheme=no_restart
\end{lstlisting}

The ``Adaptive restart (theory)'' setting is invoked by appending the following arguments to the command in Listing~\ref{list:pdlp_ablation}:

\begin{lstlisting}
--restart_to_current_metric=no_restart_to_current
--sufficient_reduction_for_restart=0.37 --necessary_reduction_for_restart=0.37
\end{lstlisting}

\subsubsection{Primal weight updates}
This section describes the commands corresponding to the ablation experiments in Section~\ref{sec:ablation-primal-weight} on primal weights.

The primal weight is fixed, with the bias $\texttt{\$XI} =\xi$, by appending the following arguments to the command in Listing~\ref{list:pdlp_ablation}:

\begin{lstlisting}
--primal_weight_update_smoothing=0.0
--primal_importance=$XI
\end{lstlisting}

\subsubsection{Presolve}
For the presolve ablation study in Section~\ref{sec:ablation-presolve},
the ``No presolve'' setting is evaluated by applying PDLP to the original (non-presolved) version of the \texttt{MIP Relaxations} dataset. See \texttt{benchmarking/README.md} for more information on the dataset generation.

\subsubsection{Diagonal preconditioning}
This section describes the commands corresponding to the ablation experiments in Section~\ref{sec:app-diagonal} on diagonal preconditioning.

The ``No scaling'' setting corresponds to appending the following arguments to the command in Listing~\ref{list:pdlp_ablation}:

\begin{lstlisting}
--l_inf_ruiz_iterations=0
--pock_chambolle_rescaling=false
\end{lstlisting}

The ``Ruiz'' setting corresponds to appending the following argument to the command in Listing~\ref{list:pdlp_ablation}:

\begin{lstlisting}
--pock_chambolle_rescaling=false
\end{lstlisting}

The ``Pock-Chambolle'' setting corresponds to appending the following argument to the command in Listing~\ref{list:pdlp_ablation}:

\begin{lstlisting}
--l_inf_ruiz_iterations=0
\end{lstlisting}

The ``Ruiz + Pock-Chambolle'' setting is PDLP.
\end{document}